\theoremstyle{plain}
\newtheorem{theorem}{Theorem}
\newtheorem{lemma}{Lemma}
\theoremstyle{definition}
\theoremstyle{remark}
\numberwithin{equation}{section}
\begin{document}
\title[Polynomial Solutions to Pell's Equation ]
       {Polynomial Solutions to Pell's Equation and Fundamental Units in Real Quadratic Fields}
\author{J. Mc Laughlin}
\address{Mathematics Department\\
       University of Illinois \\
        Champaign - Urbana, Illinois 61820}
\email{jgmclaug@math.uiuc.edu}
\keywords{Pell's equation, Continued Fractions}
\subjclass{Primary:11A55}
\date{January,21,1999}

\begin{abstract}
Finding polynomial  solutions to Pell's equation is of interest as
such solutions sometimes allow the fundamental units to be
determined in an infinite class of real quadratic fields.

In this paper,
for each triple of positive integers $(c,h,f)$ satisfying
\[c^{2}-f\,h^{2}=1,
\]
where $(c,h)$ are the smallest pair of integers satisfying this
equation, several sets of polynomials $(c(t),h(t),f(t))$ which satisfy
\[c(t)^{2}-f(t)\,h(t)^{2}=1 \text{ and } (c(0),h(0),f(0)) = (c,h,f)
\]
 are derived.
Moreover, it is shown that  the pair $(c(t),h(t))$
constitute the fundamental polynomial solution to the Pell's equation above.

The continued fraction expansion of $\sqrt{f(t)}$ is
 given in certain general cases (for example, when the continued
 fraction expansion of $\sqrt{f}$ has odd period length, or even
 period length or has period length $\equiv 2 \mod{4}$ and the middle
 quotient has a particular form etc).

Some applications to determining the fundamental unit in real
quadratic fields is also discussed.
\end{abstract}

\maketitle

\section{Introduction: Polynomial Solution's to Pell's Equation}\label{S:intro}
Finding polynomial solutions to Pell's equation is of interest as such solutions
sometimes allow the fundamental units to be determined in an infinite class of
real quadratic fields. Finding such polynomial solutions is not only an
interesting problem in its own right, but knowing the fundamental unit in
a real quadratic field means that the class number of the field can be found
via some version of  Dirichlet's class number formula (see, for example,
\cite{R01}).

In \cite{oP13}, Perron gives some examples of  polynomials $f$ for
which the continued fraction expansion of $\sqrt{f}$ can be stated
explicitly, but the period in these cases is at most 6.
These examples were added to in a paper by Yamamoto (\cite{Y71}) and later, in
\cite{B76}, Bernstein obtained a large number of such polynomials $f$ for which
the continued fraction expansion of  $\sqrt{f}$ could be made arbitrarily
long and in \cite{B76a} he gave explicit expressions for the
fundamental unit in the quadratic field $\mathbb{Q}(\sqrt{f})$.
Further examples were given in papers by Levesque and Rhin~\cite{LR86},
 Madden~\cite{M01}, Van Der Poorten~\cite{VDP94} and
 Van Der Poorten and Williams~\cite{VDPW99}.

In this paper,
for each triple of positive integers $(c,h,f)$ satisfying
\[c^{2}-f\,h^{2}=1,
\]
where $(c,h)$ are the smallest pair of integers satisfying this
equation, several sets of polynomials $(c(t),h(t),f(t))$ which satisfy
\[c(t)^{2}-f(t)\,h(t)^{2}=1 \text{ and } (c(0),h(0),f(0)) = (c,h,f)
\]
 are derived.
Moreover, it is shown that  the pair $(c(t),h(t))$
constitute the fundamental polynomial solution to the Pell's equation above.

The continued fraction expansion of $\sqrt{f(t)}$ is
 given in certain general cases (for example, when the continued
 fraction expansion of $\sqrt{f}$ has odd period length, or even
 period length or has period length $\equiv 2 \mod{4}$ and the middle
 quotient has a particular form etc).

Some applications to determining the fundamental unit in real
quadratic fields are also discussed.

\textbf{Definition:} A polynomial $f(t) \in Z[\,t]\,$  is called a \emph{Fermat-Pell polynomial in one
variable} if there exists
polynomials $c(t), h(t) \in Z[\,t\,]\,$
such that
\begin{equation}\label{E:pe}
c(t)^{2} - f(t) h(t)^{2} = 1,\,\, \text{for all t}.
\end{equation}

A triple of polynomials $(c(t), h(t), f(t))$ satisfying
equation~\ref{E:pe} constitutes a \emph{ polynomial solution}
to Pell's equation.
A Fermat-Pell polynomial is said
to have a \emph{polynomial continued fraction expansion} if there
exists an integer $T$ and  a positive integer
$n$ such that
\begin{equation} \label{E:pcf}
\sqrt{f(t)} = [\,a_{0}(t);\overline{a_{1}(t), \cdots ,
a_{n}(t)} ]\,,\,
\end{equation}
for all integral $t \geq T$, where the $a_{i}(t) \in \mathbb{Z} [\,t\,]$. For a fixed Fermat-Pell
polynomial $f(t)$ a pair of polynomials $(c(t),h(t))$ is said to be the
\emph{fundamental polynomial solution} to ~\eqref{E:pe} if
$(c(t),h(t))$ constitutes the smallest solution in positive integers to
equation~\ref{E:pe}
for all non-negative integral values of $t \geq T$.

 In this paper
it is shown how to construct several infinite families of
 Fermat-Pell polynomials in which the continued fraction expansion
of the polynomials can have arbitrary long  period length.
Moreover, the polynomial continued fraction expansion of
these polynomials can be written down explicitly.


\section{Notation/ useful facts about convergents}
As usual $[\,a_{0};\overline{a_{1} ,\cdots , a_{n},2a_{0}}]$ will
denote the simple infinite periodic continued fraction
{\allowdisplaybreaks
\begin{equation*}
a_{0} + \cfrac{1}
          {a_{1} + \cfrac{1}
                 {  \ddots a_{n-1}+ \cfrac{\ddots}
                           {a_{n} + \cfrac{1}
                                  {2a_{0}  + \cfrac{1}
                                               {a_{1} +  \ddots}
                                          }
                                 }
                          }
                }.
\end{equation*}
}
$\displaystyle{\frac{A_{i}}{B_{i}}}$ will denote the $i$'th convergent:
\[
a_{0} + \cfrac{1}{a_{1} + \cfrac{1}{a_{2} + \cdots
\cfrac{1}{a_{i} }}}.
\]

Use will also be made of the following:
\begin{align}\label{E:recur}
&A_{i} = a_{i}A_{i-1} + A_{i-2}, \\
&B_{i} = a_{i}B_{i-1} + B_{i-2},\notag\\
&A_{i}B_{i-1} - A_{i-1}B_{i} = (-1)^{i-1}, \text{ where }\notag\\
 &A_{-1}= B_{-2}=1, \,\,A_{-2}= B_{-1}=0,\notag
\end{align}
each of these being valid for $i = 0,1,2 \cdots$.

\section{ Some necessary Lemmas}

Before coming to the main results of the paper
several lemmas from the theory of the convergents of the
 continued fraction  expansion of $\sqrt{f}$, where $f$ is a
non-square positive integer, are needed.
Some are given without proof since the results are well known or are
straightforward. In what follows $c$ and $h$ will denote the smallest pair
of positive integers satisfying $c^{2}-fh^{2}=1$. Recall that if the
 continued fraction  expansion of
$\sqrt{f}$ has period $2m$ then $(c,h) = (A_{2m-1},B_{2m-1})$ and that
if the continued fraction  expansion of
$\sqrt{f}$ has period $2m+1$ then $(c,h) = (A_{4m+1},B_{4m+1})$.


\begin{lemma}\label{L:cfinv}
Denote the $i$'th convergent to the continued fraction \\
$b_{0} + \displaystyle{\frac{1}{b_{1}+}\frac{1}{b_{2}+ \ldots}}$ by
$b_{0} + \displaystyle{\frac{1}{b_{1}+}\frac{1}{b_{2}+} \cdots +\frac{1}{b_{i}}} =
\frac{P_{i}}{Q_{i}}$. Then
\begin{equation}\label{E:invcf}
b_{m} + \frac{1}{b_{m-1}+}\frac{1}{b_{m-2}+} \cdots +\frac{1}{b_{0}} =
\frac{P_{m}}{P_{m-1}},\,\, \text{for all}\,\, m \geq 1 .
\end{equation}
\end{lemma}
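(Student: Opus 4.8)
The plan is to prove~\eqref{E:invcf} by induction on $m$, using nothing more than the numerator half of the convergent recurrence~\eqref{E:recur}, namely $P_i = b_i P_{i-1} + P_{i-2}$ together with the initial conventions $P_{-1}=1$ and $P_{-2}=0$. The whole identity should fall out once one notices that the reversed fraction at level $m$ is built from the reversed fraction at level $m-1$ by adjoining a single new partial quotient $b_m$ at the \emph{outermost} position.

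First I would settle the base case. It is convenient to start at $m=0$, where the degenerate reversed fraction is just $b_0$; since $P_0 = b_0 P_{-1} + P_{-2} = b_0$ and $P_{-1}=1$, this equals $P_0/P_{-1}$, so the claim holds. (The case $m=1$ can be checked directly in the same way: $b_1 + 1/b_0 = (b_0 b_1 + 1)/b_0 = P_1/P_0$.)

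For the inductive step, suppose the identity holds at level $m-1$, that is, $b_{m-1} + \frac{1}{b_{m-2}+}\cdots+\frac{1}{b_0} = P_{m-1}/P_{m-2}$. Peeling off $b_m$ and substituting the hypothesis into the inner fraction gives
\[
b_m + \frac{1}{b_{m-1}+}\frac{1}{b_{m-2}+}\cdots+\frac{1}{b_0}
= b_m + \frac{1}{\left(b_{m-1}+\frac{1}{b_{m-2}+}\cdots+\frac{1}{b_0}\right)}
= b_m + \frac{P_{m-2}}{P_{m-1}}.
\]
Clearing the denominator, this equals $(b_m P_{m-1} + P_{m-2})/P_{m-1}$, and one final application of the recurrence identifies the numerator as $P_m$, yielding $P_m/P_{m-1}$ and closing the induction.

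I do not expect a real obstacle: the argument is pure bookkeeping. The only point needing care is that the new quotient $b_m$ is adjoined at the outer end of the fraction, which is exactly why the numerator recurrence (and not a convergent-level recurrence appending at the inner end) is the appropriate tool, and why the answer involves only the $P$'s and never the $Q$'s. As an alternative I would record the matrix form: since $\begin{pmatrix} P_i & P_{i-1}\\ Q_i & Q_{i-1}\end{pmatrix} = \prod_{j=0}^{i}\begin{pmatrix} b_j & 1\\ 1 & 0\end{pmatrix}$ and each factor is symmetric, transposing reverses the product, so $\begin{pmatrix} b_m & 1\\ 1 & 0\end{pmatrix}\cdots\begin{pmatrix} b_0 & 1\\ 1 & 0\end{pmatrix} = \begin{pmatrix} P_m & Q_m\\ P_{m-1} & Q_{m-1}\end{pmatrix}$; reading off the ratio of the first-column entries of this reversed product gives $P_m/P_{m-1}$ at once, and makes transparent why $P_{m-1}$, and not a $Q$, appears in the denominator.
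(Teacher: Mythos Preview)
Your argument is correct; the induction using the numerator recurrence $P_i=b_iP_{i-1}+P_{i-2}$ (with $P_{-1}=1$, $P_{-2}=0$) is the standard way to establish this identity, and the matrix-transpose alternative you sketch is equally valid. The paper itself does not give a proof at all---it simply records the result as ``well known''---so your write-up is a genuine elaboration rather than a departure from any argument in the text.
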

\begin{proof}
This is well known.
\end{proof}


Label the usual three sequences of positive integers used to determine
the continued fraction expansion $[a_{0};a_{1}, \cdots]$ of $\sqrt{f}$ as
follows:\\
$r_{0} = 0, s_{0} = 1, a_{0} = \lfloor \sqrt{f} \rfloor$,
$r_{k+1} = a_{k}s_{k} - r_{k}$,
$s_{k+1} = \displaystyle{(f - r_{k+1}^{2})/s_{k}}$ and
$a_{k+1} = \displaystyle{\lfloor (\sqrt{f} +
r_{k+1})/s_{k+1}\rfloor}$, for
 $k \geq 0$.
\begin{lemma}
\label{L:rsa}
\begin{equation}\label{E:con1}
A_{k}A_{k-1} - fB_{k}B_{k-1} = (-1)^{k}r_{k+1},\text{ for }k \geq -1.
\end{equation}
\begin{equation}\label{E:con2}
A_{k}^{2} - fB_{k}^{2} = (-1)^{k+1}s_{k+1},\text{ for } k \geq -1.
\end{equation}
\end{lemma}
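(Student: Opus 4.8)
The plan is to prove both identities \eqref{E:con1} and \eqref{E:con2} simultaneously by strong induction on $k$, feeding each identity into the inductive step for the other. The only tools needed are the convergent recurrences \eqref{E:recur} together with the defining relations $r_{k+1} = a_{k}s_{k} - r_{k}$ and $s_{k+1} = (f - r_{k+1}^{2})/s_{k}$.

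First I would dispose of the base cases $k = -1$ and $k = 0$ by direct substitution of the initial data $A_{-1} = B_{-2} = 1$, $A_{-2} = B_{-1} = 0$, $A_{0} = a_{0}$, $B_{0} = 1$, together with $r_{0} = 0$, $s_{0} = 1$, $r_{1} = a_{0}$ and $s_{1} = f - a_{0}^{2}$. For instance, at $k = 0$ one checks $A_{0}A_{-1} - fB_{0}B_{-1} = a_{0} = r_{1}$ and $A_{0}^{2} - fB_{0}^{2} = a_{0}^{2} - f = -s_{1}$, matching the claimed signs $(-1)^{0}r_{1}$ and $(-1)^{1}s_{1}$. Two base cases are required because the recurrence driving the second identity reaches back two steps.

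For the inductive step of \eqref{E:con1}, I would expand $A_{k} = a_{k}A_{k-1} + A_{k-2}$ and $B_{k} = a_{k}B_{k-1} + B_{k-2}$ inside $A_{k}A_{k-1} - fB_{k}B_{k-1}$; the expression collapses to $a_{k}(A_{k-1}^{2} - fB_{k-1}^{2}) + (A_{k-2}A_{k-1} - fB_{k-2}B_{k-1})$, and applying \eqref{E:con2} and \eqref{E:con1} at index $k-1$ gives $(-1)^{k}(a_{k}s_{k} - r_{k}) = (-1)^{k}r_{k+1}$ by the definition of $r_{k+1}$. This step is essentially immediate once the lower-index identities are available.

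The inductive step for \eqref{E:con2} is the crux. Expanding the square gives
\[
A_{k}^{2} - fB_{k}^{2} = a_{k}^{2}(A_{k-1}^{2} - fB_{k-1}^{2}) + 2a_{k}(A_{k-1}A_{k-2} - fB_{k-1}B_{k-2}) + (A_{k-2}^{2} - fB_{k-2}^{2}),
\]
and substituting \eqref{E:con2} at indices $k-1$ and $k-2$ and \eqref{E:con1} at index $k-1$ reduces the claim to the purely arithmetic identity $s_{k+1} = s_{k-1} - a_{k}^{2}s_{k} + 2a_{k}r_{k}$. I expect this to be the main obstacle, since it does not follow from the convergent recurrences alone. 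To establish it I would multiply through by $s_{k}$ and use $s_{k}s_{k+1} = f - r_{k+1}^{2}$ and $s_{k-1}s_{k} = f - r_{k}^{2}$, whereby the goal becomes $r_{k}^{2} - r_{k+1}^{2} = a_{k}s_{k}(r_{k} - r_{k+1})$; this follows at once from $r_{k} + r_{k+1} = a_{k}s_{k}$, which is just a restatement of $r_{k+1} = a_{k}s_{k} - r_{k}$. Combining the two inductive steps completes the joint induction and hence the lemma.
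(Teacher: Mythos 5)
Your proposal is correct and matches the paper's proof, which is given only as ``a simple double induction'': you carry out exactly that joint induction on $k$, and your reduction of the step for \eqref{E:con2} to $s_{k+1}=s_{k-1}-a_{k}^{2}s_{k}+2a_{k}r_{k}$, verified via $s_{k}s_{k+1}=f-r_{k+1}^{2}$, $s_{k-1}s_{k}=f-r_{k}^{2}$ and $r_{k}+r_{k+1}=a_{k}s_{k}$, checks out. The only point worth making explicit is that cancelling the factor $s_{k}$ at the end uses $s_{k}\neq 0$, which holds since $f$ is not a perfect square.
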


\begin{proof}
A simple double induction.
\end{proof}


\begin{lemma}\label{L:rk}
$r_{k} \leq a_{0}$,\, for all $k \geq 0.$
\end{lemma}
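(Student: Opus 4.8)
The plan is to interpret the integer sequences $r_k,s_k$ through the complete quotients of $\sqrt f$ and to exploit the classical fact that, from the first one onward, these complete quotients are \emph{reduced} quadratic irrationals. Concretely, I would first record the standard identity that the $k$-th complete quotient of $\sqrt f$ is
\[
\alpha_k = \frac{\sqrt f + r_k}{s_k},
\]
so that the defining recursions $r_{k+1}=a_ks_k-r_k$, $s_{k+1}=(f-r_{k+1}^2)/s_k$, $a_k=\lfloor\alpha_k\rfloor$ are precisely the statement $\alpha_k = a_k + 1/\alpha_{k+1}$. This correspondence is verified by a short induction on $k$ (the same computation that underlies Lemma~\ref{L:rsa}), and it also shows that $s_k>0$ and $s_k\mid f-r_k^2$ for every $k$.

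Next I would prove by induction on $k$ that $\alpha_k$ is reduced for all $k\geq 1$, meaning $\alpha_k>1$ and its algebraic conjugate $\bar\alpha_k=(r_k-\sqrt f)/s_k$ lies in $(-1,0)$. For the base case $k=1$ one computes $r_1=a_0s_0-r_0=a_0$ and $s_1=f-a_0^2>0$, whence $\alpha_1=1/(\sqrt f-a_0)>1$ and $\bar\alpha_1=-1/(\sqrt f+a_0)\in(-1,0)$, using $a_0=\lfloor\sqrt f\rfloor\geq 1$. For the inductive step, from $\alpha_{k+1}=1/(\alpha_k-a_k)$ with $a_k=\lfloor\alpha_k\rfloor\geq 1$ and $0<\alpha_k-a_k<1$ one gets $\alpha_{k+1}>1$; conjugating gives $\bar\alpha_{k+1}=1/(\bar\alpha_k-a_k)$, and since $\bar\alpha_k\in(-1,0)$ and $a_k\geq 1$ the denominator is less than $-1$, forcing $\bar\alpha_{k+1}\in(-1,0)$.

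Finally I would extract the bound. For $k\geq 1$, reducedness gives $\alpha_k>0>\bar\alpha_k$, so
\[
\alpha_k\bar\alpha_k = \frac{r_k^2-f}{s_k^2} < 0,
\]
hence $r_k^2<f$, i.e. $-\sqrt f<r_k<\sqrt f$. As $r_k$ is an integer and $\sqrt f$ is irrational, this yields $r_k\leq\lfloor\sqrt f\rfloor=a_0$. The case $k=0$ is immediate since $r_0=0\leq a_0$.

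I expect the only real obstacle to be the bookkeeping in the first step, namely verifying cleanly that the integer recursion produces exactly $\alpha_k=(\sqrt f+r_k)/s_k$ with $s_k$ a positive integer dividing $f-r_k^2$; after that, the reducedness induction and the final sign argument are short. Alternatively, the second and third steps can be collapsed into a single direct induction establishing $\sqrt f-r_k<s_k$ together with $r_k<\sqrt f$ straight from the recursions, but the reduced-quotient formulation is cleaner and makes the source of the bound transparent.
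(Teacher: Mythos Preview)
Your proof is correct, but it takes a considerably longer route than the paper's. The paper dispatches the lemma in one line directly from the recursion: since $a_k=\lfloor(\sqrt f+r_k)/s_k\rfloor$ and $s_k>0$, one has
\[
r_{k+1}=a_ks_k-r_k=\Big\lfloor\tfrac{\sqrt f+r_k}{s_k}\Big\rfloor s_k-r_k<\tfrac{\sqrt f+r_k}{s_k}\cdot s_k-r_k=\sqrt f<a_0+1,
\]
whence $r_{k+1}\le a_0$ (and $r_0=0$ handles the base case). Your argument instead develops the full theory of reduced complete quotients---proving by induction that $\alpha_k=(\sqrt f+r_k)/s_k$ is reduced for $k\ge1$, and then reading off $r_k^2<f$ from the sign of $\alpha_k\bar\alpha_k$. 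This is valid and has the virtue of yielding more information (you get the two-sided bound $|r_k|<\sqrt f$ and, implicitly, the inequalities $0<\sqrt f-r_k<s_k<\sqrt f+r_k$ that characterize reducedness), which can be useful elsewhere in the theory. But for the bare inequality $r_k\le a_0$ it is substantial overkill; the paper's direct estimate needs none of that machinery. Your closing remark about collapsing steps two and three hints at the shortcut, though even that is more than is needed: no induction on the bound is required, only the single inequality $\lfloor x\rfloor<x$ applied to the irrational $x=(\sqrt f+r_k)/s_k$.
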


\begin{proof}
$r_{k+1} = a_{k}s_{k} - r_{k} =
\displaystyle{\lfloor (\sqrt{f} + r_{k})/s_{k}\rfloor}s_{k} -
r_{k}
<( \displaystyle{(\sqrt{f} + r_{k})/s_{k}})s_{k} - r_{k}\\
= \sqrt{f} < a_{0} + 1.$
\end{proof}


\begin{lemma} \label{L:sm}
If the continued fraction expansion of $\sqrt{f}$ has
period 2m and $a_{m} = a_{0}$ or $a_{0} - 1$ then $s_{m} = 2$.
\end{lemma}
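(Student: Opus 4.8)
The plan is to reduce the whole statement to the single \emph{midpoint identity} $a_m s_m = 2r_m$ and then to squeeze $s_m$ down to $2$ using the hypothesis on $a_m$ together with Lemma~\ref{L:rk}. The key structural input is the palindromic symmetry of the period of $\sqrt{f}$, which I would extract from the convergent-reversal formula \eqref{E:invcf} of Lemma~\ref{L:cfinv}: applied to the block $b_i = a_i$, it reads the partial quotients backwards, and since $\sqrt f + a_0$ is reduced and purely periodic its Galois image is the reversed period. This forces $a_i = a_{2m-i}$ for $1 \le i \le 2m-1$; feeding the palindrome back into the defining recurrences $r_{k+1} = a_k s_k - r_k$ and $s_{k+1} = (f - r_{k+1}^2)/s_k$ yields the reflection $r_k = r_{2m+1-k}$, whose central instance is $r_{m+1} = r_m$. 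Combining this with $r_{m+1} = a_m s_m - r_m$ gives $a_m s_m = 2r_m$. (One reaches the same relation computationally by expanding \eqref{E:con1} at $k=m$ through the recurrences \eqref{E:recur} and substituting \eqref{E:con1} and \eqref{E:con2} at $k=m-1$; but that route only reproduces the tautology $r_{m+1} = a_m s_m - r_m$, so the symmetry $r_{m+1}=r_m$ is exactly the extra ingredient that does the work.)

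Granting $a_m s_m = 2r_m$, I would first record two easy facts: $r_m \ge 1$ (otherwise $a_m = 0$), and $s_m \ne 1$. Indeed, if $s_m = 1$ then $a_m = \lfloor \sqrt f + r_m \rfloor = \lfloor \sqrt f\rfloor + r_m = a_0 + r_m \ge a_0 + 1$, contradicting $a_m \in \{a_0, a_0-1\}$. Next I would run the main inequality. Since $a_m \ge a_0 - 1$ we have $a_0 \le a_m + 1$, and Lemma~\ref{L:rk} gives $r_m \le a_0$; hence $a_m s_m = 2r_m \le 2a_0 \le 2a_m + 2$, that is, $a_m(s_m - 2) \le 2$. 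When $a_m \ge 3$ this forces $s_m - 2 \le 2/3 < 1$, so $s_m \le 2$, and with $s_m \ne 1$ we conclude $s_m = 2$. This disposes of every case with $a_m \ge 3$ in one stroke, independent of which of the two values $a_m$ takes.

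The hard part will be the residual range $a_m \in \{1,2\}$, i.e.\ $a_0 \le 3$, where the inequality only gives $s_m \in \{2,3\}$ or $\{2,3,4\}$. Here the bound $r_m \le a_0$ is too blunt, and I expect the real content to be the sharper claim that at the center $r_m \le a_0 - 1$ (equivalently $r_m \ne a_0$) whenever $a_m = a_0 - 1$; granting this, $(a_0-1)s_m = 2r_m \le 2(a_0-1)$ gives $s_m \le 2$ outright. Note that this sharpening must use $m \ge 2$: the assumption that the period exceeds $2$ is genuinely needed, since for instance $\sqrt{8} = [2;\overline{1,4}]$ has period $2$ with $a_1 = a_0 - 1$ yet $s_1 = 4$, and $\sqrt{12} = [3;\overline{2,6}]$ has period $2$ with $a_1 = a_0 - 1$ yet $s_1 = 3$.

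To prove $r_m \ne a_0$ for $m \ge 2$ I would return to Lemma~\ref{L:cfinv} and the purely periodic cycle of pairs $(r_k, s_k)$, $k \ge 1$, whose minimal period is exactly $2m$. One always has $r_1 = a_0$, so if in addition $r_m = a_0$ then the two complete quotients $(\sqrt f + a_0)/s_1$ and $(\sqrt f + a_0)/s_m$ share the same $r$-value at the reflection-symmetric position $m$; I would argue, via the reversal map $(\sqrt f + P)/Q \mapsto (\sqrt f + P)/\bigl((f-P^2)/Q\bigr)$ induced by \eqref{E:invcf}, that the only way $r_m = a_0$ can coexist with $a_m = a_0 - 1$ is at $m = 1$, the collision otherwise shortening the period below $2m$ and violating minimality. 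Should this structural argument prove stubborn, the fallback is that $a_m \le 2$ forces $a_0 \le 3$, confining $f$ to the finitely many nonsquares in the intervals $(a_0^2,(a_0+1)^2)$ for $a_0 \in \{1,2,3\}$, whose even-period members with $m \ge 2$ (only $f = 7$ and $f = 14$) can be checked directly. I anticipate this exclusion of $r_m = a_0$ at an interior midpoint to be the single genuine obstacle; everything else is the clean squeeze above.
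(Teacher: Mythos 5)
Your core squeeze is essentially the one the paper runs: the paper bounds $a_{m}s_{m}\leq\sqrt{f}+r_{m}<2a_{0}+1$ directly from $a_{m}=\lfloor(\sqrt{f}+r_{m})/s_{m}\rfloor$ together with Lemma~\ref{L:rk}, whereas you reach the equivalent bound $a_{m}s_{m}=2r_{m}\leq 2a_{0}$ via the exact midpoint identity coming from the palindromic symmetry $r_{m+1}=r_{m}$ (which the paper itself invokes later, in Lemma~\ref{L:trbl}). Either way the case $a_{m}=a_{0}$ closes immediately ($s_{m}\leq 2$ and $s_{m}\neq 1$ by your floor argument), and your observation that $a_{m}\geq 3$, hence $a_{0}\geq 4$, also closes the case $a_{m}=a_{0}-1$ is exactly what the paper's inequality gives there.

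The substantive difference is the residual case $a_{m}=a_{0}-1$ with $a_{0}\leq 3$, and here you have caught a genuine error in the paper. The paper dismisses this entire case with ``the case for $a_{m}=a_{0}-1$ is similar (with $r_{m}=a_{0}-1$)'', i.e.\ it asserts without proof precisely the sharpening $r_{m}\neq a_{0}$ that you isolate as the one real obstacle. Your counterexamples show that the assertion, and the lemma as stated, fail at $m=1$: for $f=8=[2;\overline{1,4}]$ one has $a_{1}=a_{0}-1$ but $r_{1}=2=a_{0}$ and $s_{1}=4$, and for $f=12$ one gets $s_{1}=3$. (The defect propagates: part (i) of Lemma~\ref{L:trbl} would read $A_{0}=B_{1}$, i.e.\ $2=1$, for $f=8$.) Your repair --- adding the hypothesis $m\geq 2$ and closing the range $a_{0}\leq 3$ by the finite check that the only nonsquares $f\leq 15$ with even period at least $4$ are $f=7$ and $f=14$, both having $s_{2}=2$ --- is complete and correct; the structural argument via the reversal map is only a sketch, but the fallback makes it unnecessary. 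Two small points: state explicitly that the subcase $a_{m}=a_{0}\in\{1,2\}$ is already settled by $a_{m}s_{m}=2r_{m}\leq 2a_{0}=2a_{m}$, and record the extra hypothesis $m\geq 2$ in the statement you are actually proving, since without it the lemma is false and the results that cite it (Lemma~\ref{L:trbl} and Theorems~\ref{T:d2m}, \ref{T:deg2P}, \ref{T:deg4o2d}) would need the case $m=1$, $a_{m}=a_{0}-1$, $a_{0}\leq 3$ excluded as well.
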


\begin{proof}
By Lemma \ref{L:rk} and the definition of
$a_{0}$,
$a_{m} = \displaystyle{\lfloor (\sqrt{f} + r_{m})/s_{m}\rfloor}$
 $<\,\displaystyle{ (\sqrt{f} + r_{m})/s_{m}}
 <\,\displaystyle{ (2a_{0} + 1)/s_{m}}$.
If $a_{m} = a_{0}$ then
$a_{0} < \,\displaystyle{ (2a_{0} + 1)/s_{m}}$ and the fact that
$r_{m} \ne 0$ clearly imply
$s_{m} = 2$ (and $r_{m} = a_{0}$).
The case for $a_{m} = a_{0} - 1$ is similar (with $r_{m} = a_{0} -
1$).
\end{proof}


\begin{lemma}\label{L:sol}
If the continued fraction expansion of $\sqrt{f}$ has period of even
length, say equal to $2m$,  then
\[
c = A_{m}B_{m-1} + A_{m-1}B_{m-2},\,\,\,\, h = B_{m-1}(B_{m} +
B_{m-2}).
\]
\end{lemma}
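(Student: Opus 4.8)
The plan is to reduce everything to the convergent matrices and then exploit the palindromic symmetry of the period. Recall from the discussion before the lemma that, since the period of $\sqrt f$ is $2m$, we have $(c,h)=(A_{2m-1},B_{2m-1})$; so the task is to express $A_{2m-1}$ and $B_{2m-1}$ through the central convergents. I would encode the recurrences \eqref{E:recur} by the matrices $M_i=\left(\begin{smallmatrix}a_i&1\\1&0\end{smallmatrix}\right)$, so that $N_k:=M_0M_1\cdots M_k=\left(\begin{smallmatrix}A_k&A_{k-1}\\B_k&B_{k-1}\end{smallmatrix}\right)$ for every $k\ge 0$. In particular, the two quantities I want form the first column of $N_{2m-1}$.

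First I would split the product at the midpoint, $N_{2m-1}=N_{m-1}\,(M_mM_{m+1}\cdots M_{2m-1})$, where $N_{m-1}=\left(\begin{smallmatrix}A_{m-1}&A_{m-2}\\B_{m-1}&B_{m-2}\end{smallmatrix}\right)$. The key input is the standard symmetry of the continued fraction of $\sqrt f$: the block $a_1,\dots,a_{2m-1}$ is a palindrome, i.e.\ $a_{m+j}=a_{m-j}$ for $0\le j\le m-1$. Hence $M_mM_{m+1}\cdots M_{2m-1}=M_mM_{m-1}\cdots M_1$, and since every $M_i$ is symmetric this reversed product equals $(M_1\cdots M_m)^{T}$. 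Writing $M_1\cdots M_m=M_0^{-1}N_m$ with $M_0^{-1}=\left(\begin{smallmatrix}0&1\\1&-a_0\end{smallmatrix}\right)$ gives
\[
M_mM_{m+1}\cdots M_{2m-1}=\begin{pmatrix}B_m&A_m-a_0B_m\\ B_{m-1}&A_{m-1}-a_0B_{m-1}\end{pmatrix}.
\]
This transpose step is exactly the reversal recorded in Lemma~\ref{L:cfinv}, now read off the matrices rather than the continued fractions.

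Multiplying out $N_{2m-1}=N_{m-1}\,(M_m\cdots M_{2m-1})$ and reading the first column, the bottom entry is $B_{m-1}B_m+B_{m-2}B_{m-1}=B_{m-1}(B_m+B_{m-2})$, which is exactly the claimed value of $h$. The top entry comes out as $A_{m-1}B_m+A_{m-2}B_{m-1}$; to match the stated expression $c=A_mB_{m-1}+A_{m-1}B_{m-2}$ I would then observe that the difference of the two equals $(A_mB_{m-1}-A_{m-1}B_m)+(A_{m-1}B_{m-2}-A_{m-2}B_{m-1})=(-1)^{m-1}+(-1)^{m-2}=0$ by the determinant identity in \eqref{E:recur}. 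This settles the formula for $c$.

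The step I expect to need the most care is the palindromic symmetry $a_{m+j}=a_{m-j}$: although classical, it is the whole engine of the argument and should be either cited cleanly or deduced (for instance from Lemma~\ref{L:cfinv} together with the $(r_k,s_k)$ recursion). The remaining friction is purely bookkeeping: keeping the factor $M_0^{-1}$ straight, and remembering that the top-left entry must be normalized into the symmetric form via the Wronskian relation, since the matrix multiplication does not hand $c$ to us in the shape stated.
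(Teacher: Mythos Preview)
Your argument is correct and follows essentially the same route as the paper: encode the convergents by the product $M_0\cdots M_{2m-1}$, use the palindromic symmetry of the period to identify the tail with a reversed (hence transposed) initial block, and read off the first column. The only difference is where you cut: the paper splits after $M_m$ (so the left factor is $N_m$ and the right factor is $M_{m-1}\cdots M_1$), which makes the top-left entry come out directly as $A_mB_{m-1}+A_{m-1}B_{m-2}$ with no further manipulation. Your split after $M_{m-1}$ is equally valid but hands you $A_{m-1}B_m+A_{m-2}B_{m-1}$ instead, forcing the extra Wronskian cancellation at the end; shifting the cut by one matrix would spare you that step.
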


\begin{proof}
By the symmetry of the continued fraction expansion of $\sqrt{f}$
\[
[a_{1},a_{2},\cdots,a_{2m-2},a_{2m-1}] =
[a_{1},a_{2},\cdots,a_{m-1},a_{m},a_{m-1},\cdots,a_{2},a_{1}].
\]
We know that
$c = A_{2m-1} ,\,\,\, h = B_{2m-1}$ and by the correspondence between
matrices and convergents:
\allowdisplaybreaks{
\begin{align*}
\left(
\begin{matrix}
a_{0} & 1 \\
1     & 0
\end{matrix}
\right)
\left(
\begin{matrix}
a_{1} & 1 \\
1     & 0
\end{matrix}
\right)
\cdots
\left(
\begin{matrix}
a_{2m-1} & 1 \\
1     & 0
\end{matrix}
\right)
&=\left(
\begin{matrix}
A_{2m-1} & A_{2m-2} \\
B_{2m-1} & B_{2m-2}
\end{matrix}
\right),
\\
\left(
\begin{matrix}
a_{0} & 1 \\
1     & 0
\end{matrix}
\right)
\left(
\begin{matrix}
a_{1} & 1 \\
1     & 0
\end{matrix}
\right)
\cdots
\left(
\begin{matrix}
a_{m} & 1 \\
1     & 0
\end{matrix}
\right)
&=
\left(
\begin{matrix}
A_{m} & A_{m-1} \\
B_{m} & B_{m-1}
\end{matrix}
\right),\\
\left(
\begin{matrix}
a_{m-1} & 1 \\
1     & 0
\end{matrix}
\right)
\cdots
\left(
\begin{matrix}
a_{1} & 1 \\
1     & 0
\end{matrix}
\right)
&=
\left(
\begin{matrix}
B_{m-1} & A_{m-1} - a_{0}B_{m-1}\\
B_{m-2} & A_{m-2} - a_{0}B_{m-2}
\end{matrix}
\right),
\end{align*}}
\[\text{ so that }
\left(
\begin{matrix}
A_{2m-1} & A_{2m-2} \\
B_{2m-1} & B_{2m-2}
\end{matrix}
\right)
=
\left(
\begin{matrix}
A_{m} & A_{m-1} \\
B_{m} & B_{m-1}
\end{matrix}
\right)
\left(
\begin{matrix}
B_{m-1} & A_{m-1} - a_{0}B_{m-1}\\
B_{m-2} & A_{m-2} - a_{0}B_{m-2}
\end{matrix}
\right)
\]
and the result follows.
\end{proof}

\textbf{Remark:} One can show in essentially the same way that if the
period of the continued fraction expansion of$\sqrt{f}$ is odd, say $2m
+ 1$,
then
\begin{align*}
c = A_{4m+1} &= (A_{m}^{2} + A_{m-1}^{2})(B_{m}^{2} + B_{m-1}^{2})
+ (A_{m}B_{m} + A_{m-1}B_{m-1})^{2}\\
&= 2(A_{m}B_{m} + A_{m-1}B_{m-1})^{2} + 1 \,\,\,\mbox{and}\\
h = B_{4m+1} &= 2(A_{m}B_{m} + A_{m-1}B_{m-1})(B_{m}^{2} +
B_{m-1}^{2}).
\end{align*}


\begin{lemma}\label{L:amcb}
If the continued fraction expansion of $\sqrt{f}$ has period of even
length which is congruent to $2 \mod{4}$, say equal to $2m$,
 then
\[
hA_{m-1} - (c - 1)B_{m-1} = 0.
\]
\end{lemma}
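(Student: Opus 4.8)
The plan is to substitute the explicit formulas for $c$ and $h$ furnished by Lemma~\ref{L:sol} directly into the left-hand side and reduce it to the determinant identity from \eqref{E:recur}. First I would record from Lemma~\ref{L:sol} that
\[
c = A_{m}B_{m-1} + A_{m-1}B_{m-2}, \qquad h = B_{m-1}(B_{m} + B_{m-2}),
\]
which is legitimate because a period $\equiv 2 \pmod 4$ is in particular even.

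Next I would form $hA_{m-1} - (c-1)B_{m-1}$ and factor out the common $B_{m-1}$. Substituting the two formulas gives
\[
hA_{m-1} - (c-1)B_{m-1} = B_{m-1}\bigl[(B_{m} + B_{m-2})A_{m-1} - A_{m}B_{m-1} - A_{m-1}B_{m-2} + 1\bigr].
\]
The two $A_{m-1}B_{m-2}$ contributions cancel inside the bracket, leaving
\[
B_{m-1}\bigl[A_{m-1}B_{m} - A_{m}B_{m-1} + 1\bigr] = B_{m-1}\bigl[1 - (A_{m}B_{m-1} - A_{m-1}B_{m})\bigr].
\]

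The final step is to invoke the convergent identity $A_{i}B_{i-1} - A_{i-1}B_{i} = (-1)^{i-1}$ from \eqref{E:recur} with $i = m$, which yields $A_{m}B_{m-1} - A_{m-1}B_{m} = (-1)^{m-1}$. Here the hypothesis does the essential work: the condition $2m \equiv 2 \pmod 4$ forces $m$ to be odd, so $(-1)^{m-1} = 1$, the bracket vanishes, and hence $hA_{m-1} - (c-1)B_{m-1} = 0$. There is no genuine obstacle beyond the bookkeeping; the only point worth flagging is that the congruence condition is precisely what pins the sign $(-1)^{m-1}$ to $+1$ rather than $-1$, and that because the whole expression collapses to $B_{m-1}\cdot 0$, no separate nonvanishing assumption on $B_{m-1}$ is required.
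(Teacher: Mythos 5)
Your proof is correct and follows essentially the same route as the paper's: substitute the formulas for $c$ and $h$ from Lemma~\ref{L:sol}, factor out $B_{m-1}$, and reduce to the determinant identity $A_{i}B_{i-1}-A_{i-1}B_{i}=(-1)^{i-1}$ together with the observation that $2m\equiv 2\pmod 4$ forces $m$ odd. The only difference is cosmetic — you apply the determinant identity directly at index $m$, whereas the paper first expands $A_{m}$ and $B_{m}$ via the recurrences and applies it at index $m-1$; your version is slightly shorter.
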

\begin{proof} By Lemma~\ref{L:sol}:
\begin{align*}
&hA_{m-1} - (c - 1)B_{m-1} = B_{m-1}(B_{m} + B_{m-2})A_{m-1}\\
&\phantom{assaasdsasaddsadsadsa}-
(A_{m}B_{m-1} + A_{m-1}B_{m-2} - 1)B_{m-1}
\hspace*{60pt} \\
&= B_{m-1}(a_{m}B_{m-1} + B_{m-2} + B_{m-2})A_{m-1}\\
&\phantom{assadsadsadsa}-
((a_{m}A_{m-1} + A_{m-2})B_{m-1} + A_{m-1}B_{m-2} - 1)B_{m-1}\\
&= 2A_{m-1}B_{m-1}B_{m-2} - (A_{m-2}B_{m-1}^{2} + A_{m-1}B_{m-1}B_{m-2}
- B_{m-1})\\
&= B_{m-1}(A_{m-1}B_{m-2} - A_{m-2}B_{m-1} + 1) = B_{m-1}((-1)^{m-2} +
1) = 0.
\end{align*}
\end{proof}


\begin{lemma}\label{L:azeo}
If $\sqrt{f} = [\,a_{0};\overline{a_{1} , \cdots, a_{n},2a_{0}}]\,$
then $A_{n} = a_{0}B_{n} + B_{n-1}$.
\end{lemma}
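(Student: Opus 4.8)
The plan is to deduce the identity from the well-known palindromic symmetry of the partial quotients of $\sqrt{f}$ together with the matrix correspondence for convergents already exploited in Lemma~\ref{L:sol}. Since the period is $a_1,\dots,a_n,2a_0$, the block $a_1,\dots,a_n$ is a palindrome, i.e.\ $a_j=a_{n+1-j}$ for $1\le j\le n$; I would state this at the outset, citing the same symmetry invoked in the proof of Lemma~\ref{L:sol}.

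First I would write, via the matrix correspondence,
\[
\begin{pmatrix} a_0 & 1 \\ 1 & 0 \end{pmatrix}\begin{pmatrix} a_1 & 1 \\ 1 & 0 \end{pmatrix}\cdots\begin{pmatrix} a_n & 1 \\ 1 & 0 \end{pmatrix}=\begin{pmatrix} A_n & A_{n-1} \\ B_n & B_{n-1} \end{pmatrix},
\]
and set $M=\prod_{j=1}^{n}\begin{pmatrix} a_j & 1 \\ 1 & 0 \end{pmatrix}$. The key observation is that each factor $\begin{pmatrix} a_j & 1 \\ 1 & 0 \end{pmatrix}$ is symmetric, and transposing a product reverses the order of the factors, so $M^{T}$ is the same product read from $a_n$ down to $a_1$; by the palindrome $a_j=a_{n+1-j}$ this is again $M$. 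Hence $M$ is symmetric, say $M=\begin{pmatrix} P & Q \\ Q & R \end{pmatrix}$. Multiplying out and comparing with the display above,
\[
\begin{pmatrix} a_0 & 1 \\ 1 & 0 \end{pmatrix}\begin{pmatrix} P & Q \\ Q & R \end{pmatrix}=\begin{pmatrix} a_0P+Q & a_0Q+R \\ P & Q \end{pmatrix}=\begin{pmatrix} A_n & A_{n-1} \\ B_n & B_{n-1} \end{pmatrix},
\]
so that $B_n=P$, $B_{n-1}=Q$ and $A_n=a_0P+Q=a_0B_n+B_{n-1}$, as claimed.

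The only real obstacle is justifying the symmetry of the partial quotients and the transpose-reversal step; both are standard, and the former was already used (without separate proof) in Lemma~\ref{L:sol}, so I would simply appeal to it. An alternative that avoids matrices is to apply Lemma~\ref{L:cfinv}: reversal gives $B_n/B_{n-1}=[a_n;a_{n-1},\dots,a_1]$, which by the palindrome equals $[a_1;a_2,\dots,a_n]$, and the general relation $A_n=a_0B_n+Q'$, where $Q'$ is the denominator of $[a_1;a_2,\dots,a_n]$, reduces the claim to showing $Q'=B_{n-1}$ — exactly the statement that $M$ is symmetric. Since the two routes carry the same content, I expect the symmetric-matrix argument to give the cleaner write-up.
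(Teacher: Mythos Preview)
Your argument is correct. The palindrome $a_j=a_{n+1-j}$ is the standard symmetry of the period of $\sqrt{f}$, and once $M=\prod_{j=1}^{n}\bigl(\begin{smallmatrix}a_j&1\\1&0\end{smallmatrix}\bigr)$ is seen to equal its transpose, the identity $A_n=a_0B_n+B_{n-1}$ drops out of a single matrix multiplication exactly as you wrote.

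The paper's proof takes a different route: it splits into the cases of even and odd period, writes $n=2m-1$ or $n=2m$, and in each case factors the convergent matrix at the \emph{midpoint} of the palindrome (invoking Lemma~\ref{L:sol} in the even case and the analogous product in the odd case) to obtain explicit expressions for $A_n$, $B_n$, $B_{n-1}$ in terms of $A_{m}$, $A_{m-1}$, $B_{m}$, $B_{m-1}$; the identity is then verified by direct arithmetic. Your approach avoids the case distinction entirely and isolates the single structural reason the identity holds, namely the symmetry of $M$. The paper's route, while less economical here, has the side benefit of producing midpoint formulas that are reused elsewhere (e.g.\ in Lemmas~\ref{L:ansq} and~\ref{L:trbl}), so its extra work is not wasted in the larger context.
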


\begin{proof}
Case(1) (the period is even, $= 2m$, say): By Lemma~\ref{L:sol},
\begin{align*}A_{n}  &=   A_{m}B_{m-1}  +    A_{m-1}B_{m-2},\,\,
B_{n} = B_{m-1}(B_{m} + B_{m-2})\text{ and }  \\
B_{n-1} &= B_{m}(A_{m-1} - a_{0}B_{m-1}) + B_{m-1}(A_{m-2} -
a_{0}B_{m-2})
\end{align*}
 and the result follows easily.

\vspace{12pt}

Case(2) (The period is odd): Suppose the period is odd, $= 2m + 1$, say and \\
$\sqrt{f} = [\,a_{0};
\overline{a_{1} , \cdots , a_{m},a_{m},\cdots, a_{1},2a_{0}}].\,$ Then
\begin{equation*}
\left(
\begin{matrix}
A_{n} & A_{n-1} \\
B_{n} & B_{n-1}
\end{matrix}
\right)
=
\left(
\begin{matrix}
A_{m} & A_{m-1} \\
B_{m} & B_{m-1}
\end{matrix}
\right)
\left(
\begin{matrix}
B_{m} & A_{m} - a_{0}B_{m}\\
B_{m-1} & A_{m-1} - a_{0}B_{m-1}
\end{matrix}
\right)
\end{equation*}
$A_{n} = A_{m}B_{m} + A_{m-1}B_{m-1},\,\,\,\,\,
 B_{n} = B_{m}^{2} + B_{m -1}^{2}$
, \\
$B_{n-1} =  B_{m}(A_{m} - a_{0}B_{m}) + B_{m-1}(A_{m-1} -
a_{0}B_{m-1})$
and again the result follows by simple arithmetic.
\end{proof}


\begin{lemma}\label{L:bnf}
If $\sqrt{f} = [\,a_{0};\overline{a_{1} ,\cdots , a_{n},2a_{0}}]\,$
then $B_{n}(f - a_{0}^{2}) = A_{n-1} + a_{0}B_{n-1}$.
\end{lemma}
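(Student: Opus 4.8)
The plan is to combine the explicit form of $A_n$ from Lemma~\ref{L:azeo} with two quadratic relations among the convergents: the Pell-type identity $A_n^2 - f B_n^2 = (-1)^{n+1}$ coming from~\eqref{E:con2}, and the determinant identity $A_n B_{n-1} - A_{n-1} B_n = (-1)^{n+1}$ from~\eqref{E:recur}. After substituting $A_n = a_0 B_n + B_{n-1}$ into both, the two right-hand sides coincide, so subtracting one relation from the other eliminates the constant term and the $B_{n-1}^2$ term, leaving an expression that factors as $B_n$ times precisely the quantity we want to vanish.

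First I would record the Pell-type relation. Since $\sqrt{f} = [\,a_0;\overline{a_1,\ldots,a_n,2a_0}\,]$ has period $n+1$, the sequence $s_k$ returns to its initial value, so $s_{n+1} = 1$; then~\eqref{E:con2} with $k = n$ gives $A_n^2 - f B_n^2 = (-1)^{n+1}s_{n+1} = (-1)^{n+1}$. Substituting $A_n = a_0 B_n + B_{n-1}$ and expanding yields
\[
a_0^2 B_n^2 + 2 a_0 B_n B_{n-1} + B_{n-1}^2 - f B_n^2 = (-1)^{n+1}. \quad (E)
\]
Next I would treat the determinant identity. Taking $i = n$ in~\eqref{E:recur} gives $A_n B_{n-1} - A_{n-1} B_n = (-1)^{n-1} = (-1)^{n+1}$, and substituting the same expression for $A_n$ produces
\[
a_0 B_n B_{n-1} + B_{n-1}^2 - A_{n-1} B_n = (-1)^{n+1}. \quad (D)
\]

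Subtracting $(D)$ from $(E)$ cancels both $(-1)^{n+1}$ and $B_{n-1}^2$, and every surviving term carries a factor of $B_n$:
\[
B_n\bigl(-(f - a_0^2)B_n + a_0 B_{n-1} + A_{n-1}\bigr) = 0.
\]
Since $B_n > 0$ for $n \ge 0$, the bracketed factor must vanish, which is exactly the assertion $B_n(f - a_0^2) = A_{n-1} + a_0 B_{n-1}$.

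The only non-mechanical ingredient, and hence the main obstacle, is justifying $A_n^2 - f B_n^2 = (-1)^{n+1}$; once that relation is in hand, the remainder is a one-line elimination. This rests on $s_{n+1} = 1$, i.e. on the standard fact that the period of $\sqrt{f}$ closes exactly when the $s$-sequence first returns to $1$, coinciding with the appearance of the partial quotient $2a_0$ at the end of the period. I would either invoke this as part of the standard theory of the expansion of $\sqrt{f}$ recalled at the opening of Section~3, or note that it is equivalent to $(A_n,B_n)$ being the fundamental solution of $A^2 - f B^2 = \pm 1$ for this normalized expansion.
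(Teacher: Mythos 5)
Your proof is correct and uses exactly the same three ingredients as the paper's: the identity $A_n = a_0B_n + B_{n-1}$ from Lemma~\ref{L:azeo}, the Pell relation $A_n^2 - fB_n^2 = (-1)^{n+1}$, and the determinant identity from~\eqref{E:recur}. The paper organizes the computation as a chain of equivalences terminating in Lemma~\ref{L:azeo}, whereas you substitute that lemma up front and subtract the two quadratic relations, but this is the same argument in different bookkeeping.
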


\begin{proof}
Recall that if the period is even ( $n$ odd) then $A_{n}^{2} - f
B_{n}^{2} = 1$, that if the period is odd ($n$ even) then $A_{n}^{2} - f
B_{n}^{2} = -1$ and that $A_{n}B_{n-1} - A_{n-1}B_{n} =
(-1)^{n-1}$.
Then
\begin{align*}
&B_{n}(f - a_{0}^{2}) = A_{n-1} + a_{0}B_{n-1}\\
&\Longleftrightarrow B_{n}f - a_{0}(A_{n} - B_{n-1}) = A_{n-1} +
a_{0}B_{n-1},\,\,\mbox{ (by  Lemma~\ref{L:azeo})}\\
&\Longleftrightarrow B_{n}f = a_{0}A_{n} + A_{n-1}\\
&\Longleftrightarrow B_{n}^{2}f = a_{0}A_{n}B_{n} + A_{n-1}B_{n}\\
&\Longleftrightarrow A_{n}^{2} + (-1)^{n} = a_{0}A_{n}B_{n} +
A_{n}B_{n-1} + (-1)^{n}\\
&\Longleftrightarrow A_{n} = a_{0}B_{n} + B_{n-1}
\end{align*}
 and the result is
true by Lemma~\ref{L:azeo}.
\end{proof}


\begin{lemma}\label{L:ansq}
If the  continued fraction  expansion of $\sqrt{f}$ has period of odd length, $ = 2m +
1$ , say,
then $c = 2A_{2m}^{2} + 1$ and $h= 2A_{2m}B_{2m}$ .
\end{lemma}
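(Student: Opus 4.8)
The plan is to reduce the claim to two results already established in the excerpt, turning the proof into a short substitution. Recall from the preamble that when the period of $\sqrt{f}$ is odd, say $2m+1$, the fundamental solution is $(c,h) = (A_{4m+1}, B_{4m+1})$. So it suffices to rewrite $A_{4m+1}$ and $B_{4m+1}$ in terms of $A_{2m}$ and $B_{2m}$.

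The first ingredient is the Remark following Lemma~\ref{L:sol}, which already records
\[
c = A_{4m+1} = 2(A_m B_m + A_{m-1}B_{m-1})^2 + 1, \quad
h = B_{4m+1} = 2(A_m B_m + A_{m-1}B_{m-1})(B_m^2 + B_{m-1}^2).
\]
The second ingredient is Case~(2) of Lemma~\ref{L:azeo}. There, for an odd period the index before the final $2a_0$ is $n = 2m$, and the displayed convergent formulas read
\[
A_{2m} = A_m B_m + A_{m-1}B_{m-1}, \qquad B_{2m} = B_m^2 + B_{m-1}^2.
\]
Substituting these two equalities into the formulas from the Remark gives $c = 2A_{2m}^2 + 1$ and $h = 2 A_{2m} B_{2m}$ at once.

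I expect the only real care to lie in the index bookkeeping, namely confirming that the ``$n$'' of Lemma~\ref{L:azeo} is exactly $2m$ for period $2m+1$ (it is, since the palindromic block preceding $2a_0$ is $a_1,\dots,a_{2m}$), so that the quantities $A_{2m},B_{2m}$ there coincide with those in the target statement; there is no substantial computation. Should one wish to avoid citing the Remark, the two displayed formulas for $A_{4m+1},B_{4m+1}$ can be reproved directly from the matrix correspondence: the transfer-matrix product over the palindromic block $a_1,\dots,a_{2m}$ factors as $R R^{T}$ with $R$ the product of the first $m$ transfer matrices, two full periods carry one to index $4m+1$, and the cross terms collapse using $A_m B_{m-1} - A_{m-1} B_m = (-1)^{m-1}$ from~\eqref{E:recur}. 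Either way the conclusion follows.
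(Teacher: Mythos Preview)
Your argument is correct: the Remark after Lemma~\ref{L:sol} gives
\[
c=2(A_mB_m+A_{m-1}B_{m-1})^2+1,\qquad h=2(A_mB_m+A_{m-1}B_{m-1})(B_m^2+B_{m-1}^2),
\]
and Case~(2) of Lemma~\ref{L:azeo} (where indeed $n=2m$ for period $2m+1$) identifies $A_{2m}=A_mB_m+A_{m-1}B_{m-1}$ and $B_{2m}=B_m^2+B_{m-1}^2$, so the substitution is immediate.

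The paper's own proof reads in its entirety ``Elementary,'' so there is no detailed argument to compare against. What the author most likely has in mind is the even shorter route that bypasses both the Remark and Lemma~\ref{L:azeo}: for odd period $2m+1$ one has $A_{2m}^2-fB_{2m}^2=-1$ (this is \eqref{E:con2} with $s_{2m+1}=1$), and squaring $A_{2m}+B_{2m}\sqrt f$ gives
\[
(A_{2m}+B_{2m}\sqrt f)^2=(2A_{2m}^2+1)+2A_{2m}B_{2m}\sqrt f,
\]
which is the fundamental $+1$ solution $c+h\sqrt f$. Your route is equally valid and has the virtue of staying entirely within facts already displayed in the paper; the squaring argument is a touch more self-contained and is probably the intended ``elementary'' step.
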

\begin{proof}
Elementary.
\end{proof}


\begin{lemma}\label{L:trbl}
Let
 $\sqrt{f}= [\,a_{0};
\overline{a_{1} ,\cdots, a_{m-1},a_{m},a_{m-1},\cdots, a_{1},2a_{0}}]\, $
, where
$a_{m} = a_{0}$ or $a_{0} - 1$. Then
\begin{align*}
&(i) A_{m-1} = B_{m} + B_{m-2}\\
&(ii)\, c - 1 = A_{m-1}(B_{m} + B_{m-2}), \mbox{ if $m$ is odd,}\\
&(iii)\, c + 1 = A_{m-1}(B_{m} + B_{m-2}), \mbox{ if $m$ is even}.
\end{align*}
\end{lemma}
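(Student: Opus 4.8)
The plan is to prove the three parts in the order (ii)/(iii) first and (i) afterwards, since the two displayed identities for $c\pm 1$ turn out to be a purely algebraic consequence of Lemma~\ref{L:sol} that does \emph{not} use the hypothesis $a_m = a_0$ or $a_0-1$, whereas part (i) is exactly where that hypothesis (equivalently, $s_m = 2$) must enter.

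For (ii) and (iii), I would start from the formula $c = A_m B_{m-1} + A_{m-1}B_{m-2}$ of Lemma~\ref{L:sol}, substitute the recurrence $A_m = a_m A_{m-1} + A_{m-2}$, and use the determinant identity $A_{m-1}B_{m-2} - A_{m-2}B_{m-1} = (-1)^{m}$ from~\eqref{E:recur} to eliminate $A_{m-2}B_{m-1}$. Collecting terms should give $c = A_{m-1}\bigl(a_m B_{m-1} + 2B_{m-2}\bigr) - (-1)^m$, and since $a_mB_{m-1} + 2B_{m-2} = B_m + B_{m-2}$ by the recurrence for $B_m$, this is precisely $c + (-1)^m = A_{m-1}(B_m + B_{m-2})$. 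Reading off the sign of $(-1)^m$ then yields (ii) when $m$ is odd and (iii) when $m$ is even.

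For (i) the hypothesis enters through Lemma~\ref{L:sm}, which gives $s_m = 2$; then Lemma~\ref{L:rsa}, equation~\eqref{E:con2} with $k = m-1$, yields the Pell-type relation $A_{m-1}^2 - fB_{m-1}^2 = (-1)^m s_m = 2(-1)^m$. I would combine this with the two facts already in hand, namely $c + (-1)^m = A_{m-1}(B_m+B_{m-2})$ from (ii)/(iii) and $h = B_{m-1}(B_m + B_{m-2})$ from Lemma~\ref{L:sol}, by factoring the Pell equation as $fh^2 = c^2 - 1 = \bigl(c + (-1)^m\bigr)\bigl(c - (-1)^m\bigr)$. Writing $Q = B_m + B_{m-2}$, the left side is $fB_{m-1}^2 Q^2$ and the right side is $A_{m-1}Q\bigl(A_{m-1}Q - 2(-1)^m\bigr)$; dividing through by $Q$ and substituting $A_{m-1}^2 - fB_{m-1}^2 = 2(-1)^m$ should collapse everything to $Q = A_{m-1}$, which is (i).

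The main obstacle is conceptual rather than computational: recognizing that (i) is the only part needing $a_m\in\{a_0,a_0-1\}$, and that the correct way to feed in $s_m=2$ is through the relation $A_{m-1}^2 - fB_{m-1}^2 = 2(-1)^m$, which is exactly the factor $c-(-1)^m$ produced by the Pell equation. A slicker but less self-contained route to (i) is to note that $r_m = a_m$ (from the proof of Lemma~\ref{L:sm}), so the $m$-th complete quotient is $\alpha_m = (\sqrt f + a_m)/2$; substituting this into the standard representation $\sqrt f = (\alpha_m A_{m-1} + A_{m-2})/(\alpha_m B_{m-1} + B_{m-2})$, clearing denominators, and equating rational and irrational parts gives $B_m + B_{m-2} = A_{m-1}$ at once. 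I would keep the first route in the write-up, since it relies only on results already established in the excerpt.
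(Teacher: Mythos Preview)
Your argument is correct. For parts (ii) and (iii) your computation is essentially the paper's: the paper writes $A_mB_{m-1}=A_{m-1}B_m+(-1)^{m-1}$ directly rather than first expanding $A_m$ by the recurrence, but the two manipulations are one line apart and yield the same identity $c+(-1)^m=A_{m-1}(B_m+B_{m-2})$.

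Part (i) is where you genuinely diverge. The paper proves (i) \emph{first} and independently of (ii)/(iii): from the period symmetry $r_{m+1}=r_m$ together with $r_m=a_m$ (extracted from the proof of Lemma~\ref{L:sm}) it has, via Lemma~\ref{L:rsa}, the pair of relations $A_mA_{m-1}-fB_mB_{m-1}=(-1)^m a_m$ and $A_{m-1}^2-fB_{m-1}^2=2(-1)^m$; eliminating $f$ between these gives $2B_m=A_{m-1}+a_mB_{m-1}$, whence $A_{m-1}=B_m+B_{m-2}$. Your route instead establishes (ii)/(iii) first, then feeds $c+(-1)^m=A_{m-1}Q$ and $h=B_{m-1}Q$ (with $Q=B_m+B_{m-2}$) into the factored Pell equation $fh^2=(c+(-1)^m)(c-(-1)^m)$ and uses only $s_m=2$ to collapse everything to $Q=A_{m-1}$. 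The trade-off: the paper's argument consumes both pieces of data $r_m=a_m$ and $s_m=2$ but keeps (i) self-contained; yours needs only $s_m=2$, at the price of reversing the logical order so that (i) depends on (ii)/(iii). Your alternative ``slicker'' route via the complete quotient $\alpha_m=(\sqrt f+a_m)/2$ is actually closer in spirit to the paper's, since it implicitly uses both $r_m=a_m$ and $s_m=2$.
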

\begin{proof}
(i)By the symmetry of the sequence $\{\,r_{i}\,\}$ ($r_{m+i} = r_{m-i+1}$),
it follows that  $
 r_{m+1}
= r_{m}.$
By Lemma~\ref{L:rsa} and Lemma~\ref{L:sm}
\begin{align*}
&A_{m}A_{m-1} - fB_{m}B_{m-1} = (-1)^{m}r_{m+1} =(-1)^{m} a_{m}\\
&A_{m-1}^{2} - fB_{m-1}^{2} = (-1)^{m}s_{m} =2 (-1)^{m}  \\
&\Longrightarrow  f = \frac{A_{m-1}^{2} + (-1)^{m-1}2}{B_{m-1}^{2}} =
\frac{A_{m}A_{m-1} + (-1)^{m-1} a_{m}}{B_{m}B_{m-1}} \\
&\Longrightarrow A_{m-1}^{2}B_{m} + (-1)^{m-1} 2B_{m} =
A_{m}A_{m-1}B_{m-1} + (-1)^{m-1} a_{m}B_{m-1} \\
&\Longrightarrow A_{m-1}^{2}B_{m} + 2B_{m} =
A_{m-1}(A_{m-1}B_{m}+ (-1)^{m-1} ) + a_{m}B_{m-1} \\
&\Longrightarrow 2B_{m} = A_{m-1}+ a_{m}B_{m-1}
\end{align*}
The result follows from
the recurrence relation for the $B_{i}'s$.\\
(ii)By Lemma~\ref{L:sol}:
\begin{align*}
c - 1 & = A_{m}B_{m-1} + A_{m-1}B_{m-2} - 1\\
& = A_{m-1}B_{m} +
(-1)^{m-1} + A_{m-1}B_{m-2} - 1\\
&= A_{m-1}B_{m} + A_{m-1}B_{m-2}.
\end{align*}
(iii)Again by Lemma~\ref{L:sol}:
\begin{align*}
c + 1 & = A_{m}B_{m-1} + A_{m-1}B_{m-2} + 1\\
& = A_{m-1}B_{m} +
(-1)^{m-1} + A_{m-1}B_{m-2} + 1\\
&= A_{m-1}B_{m} + A_{m-1}B_{m-2}.
\end{align*}
\end{proof}


\begin{lemma}\label{L:smasol}
Let  $\sqrt{f} = [\,a_{0};\overline{a_{1} ,\cdots ,
a_{n},2a_{0}}]\,$.
If $X$ and $Y$ are positive integers satisfying
\[X^{2}-fY^{2}= \pm 1
\]
then $(X,Y)= (A_{k(n+1)-1},B_{k(n+1)-1})$, for some positive integer $k$.
\end{lemma}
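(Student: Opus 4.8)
The plan is to combine two classical ingredients: first, that every positive solution of $X^2 - fY^2 = \pm 1$ gives a convergent of $\sqrt f$, and second, the identity of Lemma~\ref{L:rsa}, which pins down exactly which convergents occur.

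First I would invoke the elementary approximation fact that if $\gcd(p,q)=1$, $q \geq 1$ and $|p/q - \beta| < 1/(2q^2)$, then $p/q$ is a convergent to $\beta$. Given a positive solution $(X,Y)$ of $X^2 - fY^2 = \varepsilon$ with $\varepsilon = \pm 1$, note $\gcd(X,Y)=1$, since any common divisor would divide $\varepsilon$. Factoring, $|X/Y - \sqrt f| = 1/(Y(X + Y\sqrt f))$. Since $f \geq 2$ and $X, Y \geq 1$, one checks that $X + Y\sqrt f > 2Y$ in both sign cases (when $\varepsilon = 1$ one has $X > Y\sqrt f$, and when $\varepsilon = -1$ one has $X \geq Y$), whence $|X/Y - \sqrt f| < 1/(2Y^2)$. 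Therefore $X/Y$ is a convergent, and as convergents are in lowest terms, $X = A_k$, $Y = B_k$ for some $k \geq 0$.

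Next I would determine which $k$ occur. By equation~\eqref{E:con2} of Lemma~\ref{L:rsa}, $A_k^2 - fB_k^2 = (-1)^{k+1}s_{k+1}$, so $(A_k,B_k)$ solves the Pell equation precisely when $s_{k+1} = 1$. It remains to show $s_{k+1} = 1$ if and only if $(n+1)\mid(k+1)$. Since $\sqrt f = [a_0;\overline{a_1,\ldots,a_n,2a_0}]$ has period $n+1$, the complete quotients $(\sqrt f + r_j)/s_j$ are periodic with period $n+1$ for $j \geq 1$, and the partial quotient $a_{n+1} = 2a_0$ forces $r_{n+1} = a_0$, $s_{n+1} = 1$; periodicity then gives $s_{j(n+1)} = 1$ for every $j \geq 1$. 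Conversely, if $s_\ell = 1$ with $\ell \geq 1$, then $\sqrt f + r_\ell$ is a reduced quadratic surd, which forces $r_\ell = a_0$ and hence equals the complete quotient at index $n+1$; minimality of the period then yields $\ell \equiv 0 \pmod{n+1}$. Putting $\ell = k+1$, the solutions are exactly the convergents with $k = j(n+1)-1$, which is the claim.

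I expect the main obstacle to be the converse half of the characterization of when $s_\ell = 1$, namely showing that $s$ does not return to $1$ strictly inside a period. The cleanest route is the reducedness argument above, that $s_\ell = 1$ forces the complete quotient to be $\sqrt f + a_0$, the unique reduced surd of denominator $1$, combined with the fact that the stated period length $n+1$ is minimal, so no complete quotient recurs before $n+1$ steps. If one prefers to avoid the theory of reduced surds, the same conclusion follows from the standard structural description of the $s$-sequence attached to $\sqrt f$, which may simply be quoted.
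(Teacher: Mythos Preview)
Your proof is correct and self-contained. The paper, by contrast, does not prove this lemma at all: its ``proof'' is the single sentence ``This is well known (See, for example,~\cite{B89}, page 387).'' So rather than taking a different route, you have supplied the argument that the paper chose to outsource.

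Your approach is the standard one and is well executed: Legendre's approximation criterion reduces the problem to identifying which convergents satisfy $A_k^2 - fB_k^2 = \pm 1$, and then equation~\eqref{E:con2} converts this into the condition $s_{k+1}=1$, which you localize using periodicity and reducedness of complete quotients. The only point worth making explicit for a reader is that the reducedness step uses the fact that \emph{every} complete quotient of $\sqrt f$ from index~$1$ onward is reduced; you invoke this implicitly when asserting that $s_\ell=1$ forces $r_\ell=a_0$. That is of course a classical fact, but since you are already being careful, a one-line justification (or a reference alongside the Legendre criterion) would round out the exposition.
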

\begin{proof}
This is well known (See, for example,~\cite{B89}, page 387).
\end{proof}
The way this lemma will be used is as follows: Suppose it is known
that $X^{2}-fY^{2}= \pm 1$ and that the continued
fraction expansion of $X/Y$ is $[\,a_{0};a_{1} , \cdots,
a_{n}]\,$. By the lemma, $X/Y = A_{k(n+1)-1}/B_{k(n+1)-1}$,
for some positive integer $k$,\, and thus that the finite continued fraction
expansion $[\,a_{0};a_{1} , \cdots,a_{n}]\,$ either
contains at least one complete period of the continued fraction
expansion of $\sqrt{f}$ (the case $k>1$) or else is just one term short of a
full period (the case $k=1$), in which case the missing term is of
course $2a_{0}$ and $\sqrt{f}=[\,a_{0};\overline{a_{1} ,\cdots ,
a_{n},2a_{0}}]\,$. In particular, if $ a_{i} \not = 2a_{0}$ for some
$i,\, 1 \leq i \leq n$, then the sequence $a_{0}, a_{1} ,\cdots ,
a_{n}$ cannot contain a full period in the continued fraction expansion
of $\sqrt{f}$ (since a full period of course ends in the term $2\,a_{0}$).
This implies that $k=1$ and that the latter case holds.


In the following theorems
$\sqrt{f} = [\,a_{0};\overline{a_{1} ,\cdots , a_{n},2a_{0}}]\,$,
unless otherwise stated. As above, $c$ and $h$ are the smallest
pair of positive integers satisfying $c^{2}-fh^{2}=1$.
The variable $t$ will be
assumed to be a real variable. Initially, in the theorems that follow, the
results shall be shown to be true for non-negative integral $t$ and follow for
real $t$ by continuation.


\allowdisplaybreaks{\begin{theorem}\label{T:hto}
Let $f(t)=h^{2}t^{2} + 2ct + f$.\\
(i)If $n$ is even then
\begin{equation*}
\sqrt{f(t)} =
[\,ht + a_{0};\overline{a_{1} ,\cdots , a_{n},
2a_{0},a_{1} ,\cdots , a_{n},2(ht + a_{0})}],\,\,\,\,
 \text{for all } t \, \geq
0.
\end{equation*}
(ii)
If $n$ is odd then
\begin{equation*}
\sqrt{ f(t)} =
[\,ht + a_{0};\overline{a_{1} , \cdots, a_{n}
,2(ht + a_{0})}]\,,\, \mbox{ for all } t \, \geq
0.
\end{equation*}
(iii)In either case $X =h^{2}t+c,\,\,\,Y = h$ constitute the fundamental
solution to $X^{2} - f(t)Y^{2}=1$.
\end{theorem}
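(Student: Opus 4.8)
The plan is to record first the algebraic identity that drives everything, then to obtain the continued fraction expansions of parts (i) and (ii) from Lemma~\ref{L:smasol}, and finally to read off (iii) from those expansions together with the convergent--indexing facts recalled at the start of this section. First I would check the polynomial identity $(h^{2}t+c)^{2}-f(t)\,h^{2}=c^{2}-fh^{2}=1$, the $h^{4}t^{2}$ and $2ch^{2}t$ terms cancelling; hence $X=h^{2}t+c,\ Y=h$ is a positive solution of $X^{2}-f(t)Y^{2}=1$ for every $t\ge 0$, with $\gcd(X,Y)=1$. I would also record that $\lfloor\sqrt{f(t)}\rfloor=ht+a_{0}$ for all $t\ge 0$: since $f$ is a non-square, $a_{0}^{2}<f<(a_{0}+1)^{2}$, and from $c^{2}=fh^{2}+1$ one gets $c>a_{0}h$ and $c\le(a_{0}+1)h$; therefore $f(t)-(ht+a_{0})^{2}=2(c-a_{0}h)t+(f-a_{0}^{2})>0$ while $f(t)-(ht+a_{0}+1)^{2}=2(c-(a_{0}+1)h)t+(f-(a_{0}+1)^{2})<0$ for $t\ge 0$.

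The heart of the matter is the finite continued fraction of $X/Y=(h^{2}t+c)/h=ht+c/h$. Because $(c,h)$ equals the convergent $(A_{n},B_{n})$ of $\sqrt f$ when the period $n+1$ is even ($n$ odd) and equals $(A_{2n+1},B_{2n+1})$ when $n+1$ is odd ($n$ even), the expansion of $c/h$ is $[a_{0};a_{1},\dots,a_{n}]$ in case (ii) and $[a_{0};a_{1},\dots,a_{n},2a_{0},a_{1},\dots,a_{n}]$ in case (i); adding the integer $ht$ alters only the leading term, so
\[
\frac{X}{Y}=[\,ht+a_{0};a_{1},\dots,a_{n}\,]\ \text{(case ii)},\qquad
\frac{X}{Y}=[\,ht+a_{0};a_{1},\dots,a_{n},2a_{0},a_{1},\dots,a_{n}\,]\ \text{(case i)}.
\]
Now I would invoke Lemma~\ref{L:smasol} with $f$ replaced by $f(t)$, together with the remark following it. A full period of $\sqrt{f(t)}$ terminates in $2(ht+a_{0})$; on the other hand, by Lemma~\ref{L:rk} every interior partial quotient above satisfies $a_{i}=\lfloor(\sqrt f+r_{i})/s_{i}\rfloor\le\sqrt f+r_{i}<2a_{0}+1$, whence $a_{i}\le 2a_{0}$, and the single extra entry $2a_{0}$ in case (i) is of course $\le 2a_{0}$. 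For $t\ge 1$ we have $2a_{0}<2(ht+a_{0})$, so none of the displayed entries equals the period terminator $2(ht+a_{0})$. By the remark this forces the case $k=1$: the displayed expansion is exactly one term short of a period, the missing term being $2(ht+a_{0})$, which gives precisely the expansions asserted in (i) and (ii) for $t\ge 1$; the case $t=0$ is the hypothesis on $\sqrt f$.

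For (iii) I would argue from the expansions just established. In case (ii) the period of $\sqrt{f(t)}$ is $n+1$ (even) and in case (i) it is $2(n+1)$ (even), so in both cases the fundamental solution of $X^{2}-f(t)Y^{2}=1$ is the convergent of $\sqrt{f(t)}$ at index (period $-1$), namely $A_{n}(t)/B_{n}(t)$ in (ii) and $A_{2n+1}(t)/B_{2n+1}(t)$ in (i). That convergent is the truncation whose value we computed to be $X/Y=(h^{2}t+c)/h$, a fraction already in lowest terms, so the fundamental solution is exactly $(h^{2}t+c,\,h)$. I expect the main obstacle to be the bookkeeping that pins down the finite expansion of $(h^{2}t+c)/h$ as a one‑period‑short truncation of $\sqrt{f(t)}$ --- in particular the uniform‑in‑$t$ verification that no interior partial quotient coincides with $2(ht+a_{0})$ and the correct identification of $(c,h)$ with the specific convergent of $\sqrt f$ --- since once this is in hand Lemma~\ref{L:smasol} and standard convergent theory deliver (i), (ii) and (iii) mechanically, the passage from integral to real $t$ following by continuation.
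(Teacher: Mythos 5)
Your proposal is correct, and its skeleton is the same as the paper's: exhibit $(h^{2}t+c,h)$ as a solution, compute a finite continued fraction expansion of $(h^{2}t+c)/h$, and then invoke Lemma~\ref{L:smasol} together with the remark following it (no interior quotient equals the terminator $2(ht+a_{0})$, so $k=1$) to obtain (i), (ii) and (iii) at once. The one genuine difference is in how the finite expansion is obtained in case (i), $n$ even: the paper evaluates $[\,ht+a_{0};a_{1},\dots,a_{n},2a_{0},a_{1},\dots,a_{n}]$ forward by collapsing the tail through the recurrences~\eqref{E:recur} and then simplifying via Lemma~\ref{L:azeo} ($A_{n}=a_{0}B_{n}+B_{n-1}$) and Lemma~\ref{L:ansq} ($c=2A_{n}^{2}+1$, $h=2A_{n}B_{n}$), arriving at $ht+c/h$; you instead run the identification backwards, using the indexing fact recalled at the start of Section~3 --- for odd period $n+1$ one has $(c,h)=(A_{2n+1},B_{2n+1})$, and for even period $(c,h)=(A_{n},B_{n})$ --- so that, convergents being in lowest terms and $\gcd(c,h)=1$, the expansion of $c/h$ is simply the corresponding truncation of the expansion of $\sqrt{f}$. (For $n$ odd the paper does exactly what you do, citing the same recalled fact.) Your route buys a cleaner case (i), bypassing Lemmas~\ref{L:azeo} and~\ref{L:ansq} entirely, and you also make explicit several points the paper leaves implicit: the floor computation $\lfloor\sqrt{f(t)}\rfloor=ht+a_{0}$, the uniform-in-$t$ bound $a_{i}\leq 2a_{0}<2(ht+a_{0})$ on interior quotients via Lemma~\ref{L:rk}, the treatment of $t=0$ (where $2a_{0}$ does occur in the period in case (i), so the claimed expansion is just the doubled period of the hypothesis), and the parity-of-period argument identifying $(h^{2}t+c,h)$ as the convergent at index one less than the (now provably minimal) even period, which is what makes it fundamental; the paper's computation, in exchange, is a self-contained algebraic verification that does not depend on remembering which convergent $(c,h)$ is.
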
}

\begin{proof}
It can be easily checked that $(h^{2}t+c)^{2}-f(t)h^{2}=1$.\\
(i)For $n$ even
\begin{align*}
&[\,ht + a_{0};a_{1} , \cdots, a_{n},
2a_{0},a_{1} , \cdots, a_{n}]
= [\,ht + a_{0};a_{1} , \cdots, a_{n},
a_{0}+\frac{A_{n}}{B_{n}}]\\
&= ht +\frac{\left(a_{0}+\frac{A_{n}}{B_{n}}\right)A_{n}+A_{n-1}}
        {\left(a_{0}+\frac{A_{n}}{B_{n}}\right)B_{n}+B_{n-1}}
= ht +\frac{\left(a_{0}B_{n}+A_{n}\right)A_{n}+A_{n-1}B_{n}}
        {\left(a_{0}B_{n}+A_{n}\right)B_{n}+B_{n-1}B_{n}}\\
&= ht +\frac{2A_{n}^{2}-1}
        {2A_{n}B_{n}},\,\mbox{ by Lemma~\ref{L:azeo} and
~\eqref{E:recur}},\\
&= ht +\frac{c}{h} = \frac{h^{2}t+c}{h},\,\mbox{ by
Lemma~\ref{L:ansq}}.
\end{align*}
Since $2(ht+a_{0})$ is not in the sequence $\{a_{1} ,\cdots , a_{n},
2a_{0},a_{1} ,\cdots , a_{n}\}$ it follows from Lemma\ref{L:smasol} that $\sqrt{f(t)}$ has the
form claimed and that (iii) holds in the case $n$ is even.\\
(ii) The case for $n$ odd follows similarly since
\[
[\,ht + a_{0};a_{1} , \cdots, a_{n}] =  ht+ A_{n}/B_{n} = ht +
c/h,
\]   by a remark preceding
Lemma~\ref{L:cfinv}.

\end{proof}


\begin{theorem}\label{T:d2m}
Let $f(t)= (c - 1)^{2}h^{2}t^{2} + 2(c - 1)^{2}t + f$.\\
(i)If $n$ is even
\begin{align*}
\sqrt{f(t)}=[\,(c- 1)ht + a_{0};\overline{a_{1}, \cdots,a_{n}
,2(c - 1)ht + 2a_{0}}].
\end{align*}
(ii) If $n$ is odd and $\sqrt{f} =
[\,a_{0};\overline{a_{1},\cdots ,a_{m-1},a_{m},
a_{m-1},\cdots, a_{1},2a_{0}}]\,$, where $a_{m} = a_{0}$ or $a_{0} - 1$
and $m$ is odd then
\begin{align*}
&\sqrt{f(t)}=[(c-1)ht+a_{0};\\
&\phantom{asdafasf}\overline{a_{1},\cdots,a_{m-1},(c-1)ht+a_{m},a_{m-1},\cdots,a_{1},2(c-1)ht+2a_{0}}].
\end{align*}
(iii) In either case $X=(c - 1)h^{4}t^{2} + 2(c - 1)h^{2}t +
c,\,\,\,Y =h^{3}t + h$ constitute the fundamental
solution to $X^{2} - f(t)Y^{2}=1$.
\end{theorem}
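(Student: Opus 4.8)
The plan is to mirror the proof of Theorem~\ref{T:hto}: first settle the Pell identity of part~(iii) by direct substitution, then compute the simple continued fraction of $X/Y$ explicitly, and finally invoke Lemma~\ref{L:smasol} to read off the period of $\sqrt{f(t)}$ and the fundamentality of $(X,Y)$.

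For the identity $X^{2}-f(t)Y^{2}=1$, I would set $v=(h^{2}t+1)^{2}$ and rewrite $X=(c-1)v+1$ and $Y^{2}=h^{2}v$; using $fh^{2}=c^{2}-1$ one finds $f(t)Y^{2}=(c-1)v\big((c-1)v+2\big)$, whereupon $X^{2}-f(t)Y^{2}$ collapses to $1$. Since this forces $\gcd(X,Y)=1$, the ratio $X/Y$ is already in lowest terms, which is what later lets me identify it with a convergent. These continued fraction assertions are then established for integral $t\geq 0$ and extended to real $t$ by continuation, as elsewhere in the paper.

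In case~(i) the period of $\sqrt f$ is $n+1$ (odd), so $c=2A_{n}^{2}+1$ and $h=2A_{n}B_{n}$ by Lemma~\ref{L:ansq}, and hence $B_{n}^{2}=h^{2}/\big(2(c-1)\big)$. I would show that $X/Y=[\,(c-1)ht+a_{0};a_{1},\dots,a_{n},2(c-1)ht+2a_{0},a_{1},\dots,a_{n}]$. Exactly as in Theorem~\ref{T:hto}, the trailing block $[\,2(c-1)ht+2a_{0};a_{1},\dots,a_{n}]$ collapses to the single value $2(c-1)ht+a_{0}+A_{n}/B_{n}$; substituting this, together with Lemma~\ref{L:azeo} ($A_{n}=a_{0}B_{n}+B_{n-1}$), the determinant relation in~\eqref{E:recur}, and the three relations for $c,h,B_{n}^{2}$ above, reduces the value to $X/Y$ after routine cancellation. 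In case~(ii) the period of $\sqrt f$ is $n+1=2m$ (even), and I would instead show $X/Y=[\,(c-1)ht+a_{0};a_{1},\dots,a_{m-1},(c-1)ht+a_{m},a_{m-1},\dots,a_{1}]$, i.e.\ the fundamental solution is one step short of a full period. Here the new occurrence of $t$ sits at the palindrome's centre, and the computation leans on Lemma~\ref{L:trbl}: for $m$ odd one has $A_{m-1}=B_{m}+B_{m-2}$ and $c-1=A_{m-1}^{2}$, so the inserted term $(c-1)ht$ behaves like a central copy of the new leading quotient and the matrix product factors as in Lemma~\ref{L:sol}.

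With the continued fraction of $X/Y$ in hand, Lemma~\ref{L:smasol} finishes both parts. The decisive observation is that the terminator $2(c-1)ht+2a_{0}$ is the unique admissible partial quotient of degree one in $t$ with leading coefficient $2(c-1)h$, and that (as for any real quadratic surd) it can occur only at the ends of periods. In case~(ii) it does not occur at all—the only interior $t$-dependent quotient, $(c-1)ht+a_{m}$, has leading coefficient merely $(c-1)h$—so $k=1$, the expansion is one term short of a period, and the period is $n+1$. In case~(i) it occurs exactly once, which forces the period $P$ to satisfy $P\mid n+1$ and $P>n$, hence $P=n+1$ with $(X,Y)$ sitting at index $2n+1=2(n+1)-1$. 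In either case the smaller-indexed solution solves $-1$ (case~(i)) or fails to exist (case~(ii)), so $(X,Y)$ is indeed the fundamental solution to $X^{2}-f(t)Y^{2}=1$, giving~(iii). The main obstacle is the case~(ii) continued-fraction computation: because $t$ now enters an interior quotient rather than only the two end quotients, the matrix product no longer telescopes as cleanly as in case~(i), and one must use the precise relations of Lemma~\ref{L:trbl}—which is exactly why that case requires $m$ odd and $a_{m}\in\{a_{0},a_{0}-1\}$.
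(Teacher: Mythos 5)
Your proposal is correct, and part (ii) follows the paper's own route (collapse the reversed tail to $B_{m}/B_{m-1}$ via Lemma~\ref{L:cfinv}, then use Lemma~\ref{L:sol} and Lemma~\ref{L:trbl} to reduce the value to $\bigl((c-1)h^{2}t+c\bigr)/(h^{3}t+h)$ and hence to $X/Y$); your identification $c-1=A_{m-1}^{2}$, $h=A_{m-1}B_{m-1}$ is exactly what makes the paper's denominator $(c-1)htB_{m-1}^{2}+B_{m}B_{m-1}+B_{m-2}B_{m-1}$ equal $h^{3}t+h$. In part (i), however, you take a genuinely different path. The paper does not expand $X/Y$ over two near-periods at all: it observes that $U=(c-1)htB_{n}+A_{n}$, $V=B_{n}$ satisfy $U^{2}-f(t)V^{2}=-1$, notes that the length-$(n+1)$ expansion $[\,(c-1)ht+a_{0};a_{1},\dots,a_{n}]$ contains no terminator, so Lemma~\ref{L:smasol} forces it to be one term short of a full period, and then reads off the fundamental $+1$ solution as $X=2U^{2}+1$, $Y=2UV$ from Lemma~\ref{L:ansq}. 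That route is shorter and delivers fundamentality automatically, whereas your direct computation of the $(2n+2)$-term expansion of $X/Y$ (in the style of Theorem~\ref{T:hto}(i)) obliges you to supply the extra argument pinning down the period as $n+1$ rather than $2n+2$ --- your observation that the terminator exceeds $a_{0}(t)$ and so can only sit at period boundaries does close this, and the computation itself checks out (the middle term collapses to $2(c-1)ht+a_{0}+A_{n}/B_{n}$ and, using $A_{n}=a_{0}B_{n}+B_{n-1}$, $B_{n}^{2}=h^{2}/(2c-2)$ and the determinant relation, the value reduces to $\bigl(2U^{2}+1\bigr)/\bigl(2UB_{n}\bigr)=X/Y$). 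Your verification of the Pell identity via $v=(h^{2}t+1)^{2}$, $X=(c-1)v+1$, $Y^{2}=h^{2}v$ is a clean packaging of the ``straightforward calculation'' the paper leaves to the reader.
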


\begin{proof}
In (iii) straightforward calculation shows that the given expressions
for $X$ and $Y$ do constitute \emph{a} solution to $X^{2} -
f(t)Y^{2}=1$. What needs to be shown for (iii) is that these choices
of $X$ and $Y$ give the \emph{fundamental} solution.\\
(i)Recall that for $n$ even, $A_{n}^{2}-fB_{n}^{2}=-1$. Notice that
\begin{align*}
&[\,(c- 1)ht + a_{0};a_{1} ,\cdots, a_{n}]=(c- 1)ht+\frac{A_{n}}{B_{n}}=
\frac{(c- 1)htB_{n}+A_{n}}{B_{n}}  \mbox{ and}\\
&((c- 1)htB_{n}+A_{n})^{2}-f(t)B_{n}^{2} = -1,
\end{align*}
by the remark
above, the formula for $f(t)$ and Lemma~\ref{L:ansq}.
Therefore, by Lemma~\ref{L:smasol}, $\sqrt{f(t)}$ has the form
claimed and, by Lemma~\ref{L:ansq}, the smallest solution to $X^{2} -
f(t)Y^{2}=1$ is given by
\begin{align*}
&X=
2((c- 1)htB_{n}+A_{n})^{2}+1 =
(c - 1)h^{4}t^{2} + 2(c - 1)h^{2}t + c,\\
&Y = 2((c- 1)htB_{n}+A_{n})B_{n} = h^{3}t + h.
\end{align*}
(ii)For $n$ odd
{\allowdisplaybreaks
\begin{align*}
&[\,(c- 1)ht + a_{0};a_{1} ,\cdots, a_{m-1},(c - 1)ht +
a_{m},a_{m-1},\cdots, a_{1}]\\
&=[\,(c- 1)ht + a_{0};a_{1},\cdots, a_{m-1},(c - 1)ht
+\frac{B_{m}}{B_{m-1}}], \mbox{ by Lemma~\ref{L:cfinv}}\\
&=(c- 1)ht +\frac{\left((c - 1)ht+\frac{B_{m}}{B_{m-1}}\right)A_{m-1}+A_{m-2}}
        {\left((c -
1)ht+\frac{B_{m}}{B_{m-1}}\right)B_{m-1}+B_{m-2}}\\
&= (c- 1)ht +\frac{(c - 1)htB_{m-1}A_{m-1} + B_{m}B_{m-1}+A_{m-2}B_{m-1}}
        {(c - 1)htB_{m-1}^{2} + B_{m}B_{m-1}+B_{m-2}B_{m-1}}\\
&= (c- 1)ht + \frac{(c-1)h^{2}t+c}{h^{3}t+h}, \,
    \mbox{by Lemmas~\ref{L:sol} and~\ref{L:trbl} and~\eqref{E:recur}}\\
&=\frac{(c - 1)h^{4}t^{2} + 2(c - 1)h^{2}t + c}{h^{3}t + h}.
\end{align*}
}
The results follow by Lemma~\ref{L:smasol}.

\end{proof}


\begin{theorem}\label{T:deg2P}
Let $f(t)= (c + 1)^{2}h^{2}t^{2} + 2(c + 1)^{2}t + f$.\\
If $n$ is odd and $\sqrt{f} =
[\,a_{0};\overline{a_{1},\cdots,a_{m-1},a_{m},
a_{m-1},\cdots, a_{1},2a_{0}}]\,$ , where $a_{m} = a_{0}$ or $a_{0} - 1$
and $m$ is even then
\begin{align*}
&\sqrt{f(t)}=[(c+1)ht+a_{0};\\
&\phantom{asafasdfas}\overline{a_{1},\cdots,a_{m-1},(c + 1)ht+a_{m},
a_{m-1},\cdots,a_{1},2(c+1)ht+2a_{0}}]
\end{align*}
and
$X=(c + 1)h^{4}t^{2} + 2(c + 1)h^{2}t + c,\,\,\,
Y=h^{3}t + h$ constitute the fundamental
solution to $X^{2} - f(t)Y^{2}=1$.
\end{theorem}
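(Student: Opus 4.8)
The plan is to follow the proof of Theorem~\ref{T:d2m}(ii) almost verbatim, with $c-1$ replaced by $c+1$; the hypothesis that $m$ is \emph{even} (rather than odd) is precisely what licenses this replacement, since then it is part~(iii) of Lemma~\ref{L:trbl} that applies. First I would check that the stated $X,Y$ give \emph{a} solution: setting $u=h^{2}t$ one has $X=(c+1)(u+1)^{2}-1$, and using $fh^{2}=c^{2}-1$ one computes $f(t)Y^{2}=(c+1)^{2}(u+1)^{4}-2(c+1)(u+1)^{2}$, so that $X^{2}-f(t)Y^{2}=1$ after a one-line expansion. The substance of the theorem is therefore to verify the displayed periodic expansion and to show that this $(X,Y)$ is the \emph{fundamental} solution.

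Next I would evaluate the finite palindromic continued fraction
\[
[(c+1)ht+a_{0}; a_{1},\ldots,a_{m-1},(c+1)ht+a_{m},a_{m-1},\ldots,a_{1}].
\]
By Lemma~\ref{L:cfinv} the reversed tail $[a_{m};a_{m-1},\ldots,a_{1}]$ equals $B_{m}/B_{m-1}$, so the central complete quotient is $\xi=(c+1)ht+B_{m}/B_{m-1}$, and~\eqref{E:recur} gives the value $(c+1)ht+(\xi A_{m-1}+A_{m-2})/(\xi B_{m-1}+B_{m-2})$. After clearing the denominator $B_{m-1}$, the simplification rests on four identities: $B_{m}+B_{m-2}=A_{m-1}$ and $c+1=A_{m-1}^{2}$ from Lemma~\ref{L:trbl}(i),(iii) (the latter needing $m$ even); $h=A_{m-1}B_{m-1}$ from Lemma~\ref{L:sol} together with Lemma~\ref{L:trbl}(i); and $B_{m}A_{m-1}+A_{m-2}B_{m-1}=c$, which comes from Lemma~\ref{L:sol} and the recurrences for $A_{m},B_{m}$. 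These collapse the denominator to $h^{3}t+h$ and the fractional part to $((c+1)h^{2}t+c)/(h^{3}t+h)$, giving the total value $((c+1)h^{4}t^{2}+2(c+1)h^{2}t+c)/(h^{3}t+h)=X/Y$.

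Finally I would apply Lemma~\ref{L:smasol}. Since $X^{2}-f(t)Y^{2}=1$ and the finite continued fraction of $X/Y$ just found has partial quotients $a_{1},\ldots,a_{m-1},(c+1)ht+a_{m},a_{m-1},\ldots,a_{1}$, none of which equals $2a_{0}'=2(c+1)ht+2a_{0}$ for $t\geq 0$ (the interior quotients are at most $a_{0}$, and the central one is $(c+1)ht+a_{m}$ with $a_{m}\in\{a_{0},a_{0}-1\}$), the remark following Lemma~\ref{L:smasol} forces $k=1$. This yields the claimed one-period expansion of $\sqrt{f(t)}$ and at the same time identifies $(X,Y)$ with the convergent at the close of the first period; as the period $2m$ is even there is no solution of $X^{2}-f(t)Y^{2}=-1$, so this is the fundamental solution.

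I expect the main obstacle to be the algebraic bookkeeping in the second paragraph, and in particular the recognition that the parity of $m$ is what selects $c+1$ over $c-1$: evenness of $m$ yields $c+1=A_{m-1}^{2}$ through Lemma~\ref{L:trbl}(iii), the exact factorization needed to reduce the denominator to $h^{3}t+h$. The secondary point demanding care is the verification that no partial quotient equals $2a_{0}'$, which is what guarantees $k=1$ in Lemma~\ref{L:smasol} and hence that the computed expansion is a single full period rather than a multiple.
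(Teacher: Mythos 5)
Your proposal is correct and follows exactly the route the paper takes: the paper's own proof simply states that the argument is ``virtually identical'' to that of Theorem~\ref{T:d2m}(ii), with part (iii) of Lemma~\ref{L:trbl} (valid since $m$ is even, giving $c+1=A_{m-1}(B_{m}+B_{m-2})$) replacing part (ii). Your write-up supplies the same computation in more detail than the paper does, and the details check out.
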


\begin{proof}
The proof here is virtually identical to the proof of part (ii) of the
theorem above, the only difference being that part (iii) of
Lemma~\ref{L:trbl} is used instead of part (ii).
\end{proof}


\begin{theorem}\label{T:deg2Pb}
Let $f(t)= (c + 1)^{2}h^{2}t^{2} + 2(c^{2} - 1)t + f$.\\
If $n$ is even then
\begin{align*}
\sqrt{f(t)}=
[\,(c+ 1)ht + a_{0};\overline{a_{1},\cdots,a_{n},2(c + 1)ht + 2a_{0}}]
\end{align*}
and
\[X=\displaystyle{\frac{(c + 1)^{2}}{c-1}}h^{4}t^{2} +
2(c + 1)h^{2}t + c,\,\,\,
Y=
\displaystyle{\frac{(c + 1)}{c-1}}h^{3}t + h\]
 constitute the fundamental
solution to $X^{2} - f(t)Y^{2}=1$.
\end{theorem}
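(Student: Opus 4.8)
The plan is to imitate the argument of Theorem~\ref{T:d2m}(i), the only differences being a scaling by $(c+1)$ rather than $(c-1)$ and the mixed coefficient $2(c^2-1)=2(c-1)(c+1)$ in the linear term of $f(t)$. Since $n$ is even, the period $n+1$ of $\sqrt f$ is odd, so Lemma~\ref{L:ansq} gives $c = 2A_n^2 + 1$ and $h = 2A_nB_n$. The first thing I would do is distil from these the two facts that drive the whole proof: $A_n^2 - fB_n^2 = -1$ (the odd-period value of the norm), and the identity $hA_n = (c-1)B_n$, which follows because $(c-1)B_n = 2A_n^2B_n = A_n(2A_nB_n) = A_nh$; equivalently $h^2 = 2(c-1)B_n^2$.

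Next I would set $U = (c+1)htB_n + A_n$ and $V = B_n$ and verify $U^2 - f(t)V^2 = -1$ by direct expansion. The $t^2$-terms cancel against the leading term of $f(t)$, the constant term is $A_n^2 - fB_n^2 = -1$, and the $t$-term collapses to $2(c+1)tB_n\bigl(hA_n - (c-1)B_n\bigr) = 0$ by the key identity. Observing that $U/V = (c+1)ht + A_n/B_n = [\,(c+1)ht + a_0;\, a_1,\ldots,a_n\,]$ is a genuine simple continued fraction (all partial quotients positive, the tail $a_1,\ldots,a_n$ being the original ones), I would then apply Lemma~\ref{L:smasol} to $f(t)$. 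For $t \ge T$ large enough the bounded quotients $a_1,\ldots,a_n$ all stay below $2((c+1)ht + a_0)$, so the finite expansion cannot contain a full period; hence $k=1$, the missing final quotient is $2((c+1)ht+a_0)$, and $\sqrt{f(t)} = [\,(c+1)ht + a_0;\overline{a_1,\ldots,a_n,\,2(c+1)ht+2a_0}\,]$ as claimed. This simultaneously forces $\lfloor\sqrt{f(t)}\rfloor = (c+1)ht + a_0$, so that condition emerges as a consequence rather than a hypothesis.

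Finally, because $(U,V)$ is the convergent of $\sqrt{f(t)}$ one step short of a full (odd) period, it is the smallest positive solution of $X^2 - f(t)Y^2 = -1$, and therefore its square is the fundamental solution of $X^2 - f(t)Y^2 = 1$: explicitly $X = 2U^2 + 1$ and $Y = 2UV$. I would expand these and simplify using $2A_nB_n = h$, $2A_n^2 + 1 = c$, and $2B_n^2 = h^2/(c-1)$ to obtain $X = \frac{(c+1)^2}{c-1}h^4t^2 + 2(c+1)h^2t + c$ and $Y = \frac{(c+1)}{c-1}h^3t + h$, matching the statement.

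The main obstacle I anticipate is purely the bookkeeping of that last reduction: the stated $X$ and $Y$ carry denominators $c-1$, and one must recognise $h^2/(c-1) = 2B_n^2$ to see both that the coefficients are genuinely integral and that the squared solution simplifies to exactly the claimed polynomials. A secondary point requiring care is the quantifier on $t$: the Lemma~\ref{L:smasol} step is valid only once $t$ is large enough that the fixed quotients $a_i$ cannot equal $2((c+1)ht+a_0)$, which is why the conclusion is stated for $t \ge T$ and then extended to all real $t$ by continuation.
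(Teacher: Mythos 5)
Your proposal is correct and follows essentially the same route as the paper: both invoke Lemma~\ref{L:ansq} to get $c=2A_n^2+1$, $h=2A_nB_n$ (hence $hA_n=(c-1)B_n$ and $B_n^2=h^2/(2c-2)$), verify that $U=(c+1)htB_nA_n^{\,0}+A_n$ with $U=(c+1)htB_n+A_n$, $V=B_n$ satisfies $U^2-f(t)V^2=-1$ via the collapse of the $t$-term, apply Lemma~\ref{L:smasol} to identify the continued fraction, and then take $X=2U^2+1$, $Y=2UV$. Your added care about the threshold $T$ in the Lemma~\ref{L:smasol} step is a point the paper leaves implicit, but it does not change the argument.
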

\begin{proof}
By Lemma~\ref{L:ansq} $c = 2A_{n}^{2} + 1$ and $h= 2A_{n}B_{n}$ and
so
$B_{n}^{2}=h^{2}/(2c-2)$. Also, $A_{n}^{2}-fB_{n}^{2}=-1$.
\begin{align*}
[\,(c+ 1)ht + a_{0};a_{1} ,\cdots, a_{n}]&=(c+ 1)ht
+\frac{A_{n}}{B_{n}}=\frac{(c+ 1)htB_{n}+A_{n}}{B_{n}},\\
\mbox{ and } ((c+ 1)htB_{n}+A_{n})^{2}-f(t)B_{n}^{2} &=
2tB_{n}(c+1)(A_{n}h-(c-1)B_{n})-1\\ &=-1,
\end{align*}
\mbox{ by Lemma~\ref{L:ansq}}.
Therefore the continued fraction expansion of $\sqrt{f(t)}$ has the
form claimed and, again by Lemma~\ref{L:ansq},  the fundamental
solution to $X^{2} - f(t)Y^{2}=1$ is given by
\begin{align*}
X&=
2((c+ 1)htB_{n}+A_{n})^{2}+1\\
&= 2(c+ 1)^{2}h^{2}t^{2}B_{n}^{2}+4(c+ 1)htB_{n}A_{n}+2A_{n}^{2}+1,\\
&=\frac{(c + 1)^{2}}{c-1}h^{4}t^{2} + 2(c + 1)h^{2}t + c.\\
Y&=2((c+ 1)htB_{n}+A_{n})B_{n}=2(c+ 1)htB_{n}^{2}+2A_{n}B_{n}=
\frac{(c + 1)}{c-1}h^{3}t + h
\end{align*}

\end{proof}


\begin{theorem}\label{T:deg4o2d}
Let
\[f(t)=(c-1)^{2}h^{6}t^{4} + 4(c-1)^{2}h^{4}t^{3} +
 6(c-1)^{2}h^{2}t^{2} + 2(c-1)(2c-1)t + f.
\]
(i)If $n$ is odd and $\sqrt{f} =
[\,a_{0};\overline{a_{1} ,\cdots ,a_{m-1},a_{m},
a_{m-1},\cdots, a_{1},2a_{0}}]\,$ where  $a_{m} = a_{0}$ or $a_{0} - 1$ and
$m$ is odd
then
\begin{multline*}
\sqrt{f(t)}= [\,(c- 1)(h^{3}t^{2} + 2ht) + a_{0};\\
\overline{a_{1} ,\cdots , a_{m-1},(c - 1)ht +
a_{m},a_{m-1},\cdots,  a_{1}
,2(c - 1)(h^{3}t^{2} + 2ht) + 2a_{0}}]
\end{multline*}
(ii)If $n$ is even then
\begin{multline*}
\sqrt{f(t)}= [\,(c- 1)(h^{3}t^{2} + 2ht) + a_{0};\\
\overline{a_{1} , \cdots, a_{n},2(c - 1)ht +
2a_{0},a_{1}, \cdots, a_{n}
,2(c - 1)(h^{3}t^{2} + 2ht) + 2a_{0}}].
\end{multline*}
(iii) In either case $X=(c-1)h^{6}t^{3} + 3(c-1)h^{4}t^{2} +
  3(c-1)h^{2}t + c,\,\,Y=h + h^{3}t$ constitute the fundamental
  solution to $X^{2}-f(t)Y^{2}=1$.
\end{theorem}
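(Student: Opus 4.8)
The plan is to follow the template of Theorems~\ref{T:hto} and~\ref{T:d2m}: first check that the stated $(X,Y)$ solves the Pell equation, then exhibit a finite simple continued fraction equal to $X/Y$, and finally read off the periodic expansion and the fundamentality from Lemma~\ref{L:smasol}. Every computation becomes transparent under the substitution $u = 1 + h^{2}t$, for which $X = (c-1)u^{3} + 1$ and $Y = hu$; expanding $(c-1)^{2}u^{4} + 2(c-1)u$ recovers $f(t)h^{2}$ coefficient by coefficient, the only place the hypothesis $c^{2}-1 = f h^{2}$ is needed being the constant term, so part~(iii)'s claim that $(X,Y)$ is \emph{a} solution is immediate. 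That it is the \emph{fundamental} solution will fall out of the continued-fraction analysis. I also record the identity $a_{0}' - a_{0} = (c-1)(h^{3}t^{2} + 2ht) = (c-1)(u^{2}-1)/h =: \lambda$, which is exactly the discrepancy between the new and old leading quotients.

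For part~(ii) ($n$ even) I would mimic the proof of Theorem~\ref{T:hto}(i). Folding the inner block via $[\,2a_{0};a_{1},\dots,a_{n}] = a_{0} + A_{n}/B_{n}$, the central partial quotient $2(c-1)ht + 2a_{0}$ contributes a tail value $\beta = 2(c-1)ht + a_{0} + A_{n}/B_{n}$, so the finite continued fraction equals $\lambda + (\beta A_{n} + A_{n-1})/(\beta B_{n} + B_{n-1})$. Clearing $B_{n}$ and simplifying with Lemma~\ref{L:azeo} ($A_{n} = a_{0}B_{n} + B_{n-1}$), the relation $A_{n}B_{n-1} - A_{n-1}B_{n} = (-1)^{n-1}$ from~\eqref{E:recur}, and Lemma~\ref{L:ansq} ($c = 2A_{n}^{2} + 1$, $h = 2A_{n}B_{n}$, whence $B_{n}^{2} = h^{2}/(2(c-1))$), the $t$-free bracketed parts reduce to $c$ and $h$ while the linear parts reduce cleanly, giving inner value $((c-1)u+1)/(hu)$. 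Adding $\lambda = (c-1)(u^{2}-1)/h$ then yields $((c-1)u^{3}+1)/(hu) = X/Y$, as required.

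For part~(i) ($n$ odd, $m$ odd, $a_{m} = a_{0}$ or $a_{0}-1$) I would fold the palindromic tail $a_{m-1},\dots,a_{1}$ by Lemma~\ref{L:cfinv}, using $[\,a_{m};a_{m-1},\dots,a_{1}] = B_{m}/B_{m-1}$ to replace the central block $(c-1)ht + a_{m},a_{m-1},\dots,a_{1}$ by $(c-1)ht + B_{m}/B_{m-1}$. The resulting finite continued fraction is $\lambda + [\,a_{0};a_{1},\dots,a_{m-1},(c-1)ht + B_{m}/B_{m-1}]$, and the bracketed expression is \emph{identical} to the one evaluated in the proof of Theorem~\ref{T:d2m}(ii); invoking Lemmas~\ref{L:sol} and~\ref{L:trbl} (part~(ii), which supplies $c - 1 = A_{m-1}(B_{m} + B_{m-2})$ precisely because $m$ is odd) together with~\eqref{E:recur} reduces it to $((c-1)u+1)/(hu)$. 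Adding $\lambda$ again produces $X/Y$, so the only new content over Theorem~\ref{T:d2m} is the modified leading quotient.

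With $X/Y$ realized as the displayed finite simple continued fraction in each case, the expansion and fundamentality follow from Lemma~\ref{L:smasol}: the interior partial quotients are positive integers for integral $t \geq 0$, and the period-marker $2a_{0}' = 2\lambda + 2a_{0}$ does not occur among them. I would verify this last point by a degree comparison: $2a_{0}'$ grows like $t^{2}$, whereas the largest interior quotient ($2(c-1)ht + 2a_{0}$ in case~(ii), $(c-1)ht + a_{m}$ in case~(i)) is only $O(t)$, so $2a_{0}'$ strictly exceeds every interior quotient once $t > 0$ (at $t = 0$ the expansion degenerates to that of $\sqrt{f}$). Hence $k = 1$ in Lemma~\ref{L:smasol}, which simultaneously pins down the claimed period and the minimality of $(X,Y)$. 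I expect the main obstacle to be organizational rather than conceptual: confirming that the $t$-free parts of the cleared fractions collapse to exactly $c$ and $h$ with the correct sign of $(-1)^{n-1}$, and checking the exclusion of $2a_{0}'$ uniformly enough to force $k=1$ rather than a larger multiple.
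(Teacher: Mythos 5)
Your proposal is correct and follows essentially the same route as the paper's proof: verify $(X,Y)$ solves the Pell equation, fold the central block (via $a_{0}+A_{n}/B_{n}$ in case (ii) and via Lemma~\ref{L:cfinv} plus Lemmas~\ref{L:sol} and~\ref{L:trbl} in case (i)) to show the finite continued fraction equals $X/Y$, and conclude with Lemma~\ref{L:smasol}. The substitution $u=1+h^{2}t$ and the explicit degree comparison ruling out $2a_{0}'$ among the interior quotients are tidy packagings of computations the paper performs directly or leaves implicit, not a different argument.
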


\begin{proof}
(i) As in the proof of Theorem~\ref{T:d2m}
\begin{align*}
&[\,(c- 1)(h^{3}t^{2} + 2ht) + a_{0};
a_{1} ,\cdots , a_{m-1},(c - 1)ht +
a_{m},a_{m-1}, \cdots, a_{1}]\\
&= (c- 1)(h^{3}t^{2} + 2ht) + \frac{(c-1)h^{2}t+c}{h^{3}t+h}\\
&= \frac{(c-1)h^{6}t^{3} + 3(c-1)h^{4}t^{2} +
  3(c-1)h^{2}t + c}{ h^{3}t+h}
\end{align*}
The results follow from Lemma~\ref{L:smasol}.\\
(ii) Similarly,
\begin{align*}
&[\,(c- 1)(h^{3}t^{2} + 2ht) + a_{0};
a_{1} ,\cdots , a_{n},2(c - 1)ht +
2a_{0},a_{1}, \cdots, a_{n}]\\
&= [\,(c- 1)(h^{3}t^{2} + 2ht) + a_{0};
a_{1} ,\cdots , a_{n},2(c - 1)ht +
a_{0}+\frac{A_{n}}{B_{n}}]\\
&= (c- 1)(h^{3}t^{2} + 2ht) +
\frac{ \left(2(c - 1)ht+a_{0}+\frac{A_{n}}{B_{n}}\right)A_{n}+A_{n-1}}
     {\left(2(c - 1)ht+a_{0}+\frac{A_{n}}{B_{n}}\right)B_{n}+B_{n-1}}\\
&= (c- 1)(h^{3}t^{2} + 2ht) +
\frac{\left(2(c - 1)htB_{n}+a_{0}B_{n}+A_{n}\right)A_{n}+A_{n-1}B_{n}}
    {\left(2(c -
1)htB_{n}+a_{0}B_{n}+A_{n}\right)B_{n}+B_{n-1}B_{n}}\\
&= (c- 1)(h^{3}t^{2} + 2ht) +\frac{(c-1)h^{2}t+c}{h^{3}t+h},\,\,
\mbox{ by Lemmas~\ref{L:azeo} and~\ref{L:ansq}}.
\end{align*}
The remainder of the proof parallels part (i).
\end{proof}

\section{Fundamental Units in Real Quadratic Fields}
In many cases it is easy to use the theorems in this paper to write
down the fundamental unit in a wide class of real quadratic
fields. On page 119 of~\cite{N90} one has the following statement of
the relationship of fundamental units in $\mathbb{Q}(\sqrt{D})$, $D$ a
square-free positive rational integer, and the convergents in the
continued fraction expansion for $\sqrt{D}$:
\begin{theorem}\label{T:6}
Let $D$ be a square-free, positive rational integer and let
$K=\mathbb{Q}(\sqrt{D})$. Denote by $\epsilon_{0}$ the fundamental
unit of $K$ which exceeds unity, by $s$ the period of the continued
fraction expansion for $\sqrt{D}$, and by $P/Q$ the ($s-1$)-th
convergent of it.

If $D \not \equiv 1 \mod{4}$ or $D \equiv 1 \mod{8}$, then
\[\epsilon_{0} = P + Q \sqrt{D}.
\]
However, if $D \equiv 5 \mod{8}$, then
\[\epsilon_{0} = P + Q \sqrt{D}.
\]
or
\[\epsilon_{0}^{3} = P + Q \sqrt{D}.
\]
Finally, the norm of $ \epsilon_{0}$ is positive if the period $s$ is
even and negative otherwise.
\end{theorem}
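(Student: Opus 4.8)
The plan is to reduce the whole statement to the arithmetic of units in the order $\mathbb{Z}[\sqrt{D}]$ and then to compare that order with the full ring of integers $\mathcal{O}_K$. Throughout write $s=n+1$ for the period, so that the $(s-1)$-th convergent is $P/Q=A_{n}/B_{n}$, and set $\eta=P+Q\sqrt{D}$. First I would show that $\eta$ is the fundamental unit of the order $\mathbb{Z}[\sqrt{D}]$. Applying equation~\eqref{E:con2} of Lemma~\ref{L:rsa} with $f=D$ gives $A_{n}^{2}-DB_{n}^{2}=(-1)^{n+1}s_{n+1}=(-1)^{s}s_{s}$, and since the sequence $\{s_{k}\}$ is purely periodic with $s_{s}=s_{0}=1$, we obtain $N(\eta)=P^{2}-DQ^{2}=(-1)^{s}$. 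Thus $\eta$ is a unit of $\mathbb{Z}[\sqrt{D}]$ of norm $(-1)^{s}$, which already settles the norm computation inside this order. That $\eta$ is \emph{fundamental} follows from Lemma~\ref{L:smasol}: every positive solution of $X^{2}-DY^{2}=\pm1$ is $(A_{ks-1},B_{ks-1})$, and these are exactly the numbers $\eta^{k}$ (matching by the total ordering of units exceeding $1$), so the units $>1$ of $\mathbb{Z}[\sqrt{D}]$ are precisely the powers of $\eta$.

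Next I would compare $\mathbb{Z}[\sqrt{D}]$ with $\mathcal{O}_K$. If $D\equiv2,3\pmod 4$ then $\mathcal{O}_K=\mathbb{Z}[\sqrt{D}]$, so $\eta=\epsilon_{0}$ and $\epsilon_{0}=P+Q\sqrt{D}$ at once. If $D\equiv1\pmod 4$ then $\mathcal{O}_K=\mathbb{Z}[(1+\sqrt{D})/2]\supsetneq\mathbb{Z}[\sqrt{D}]$; since $\eta$ is a unit exceeding $1$ it must be a power $\eta=\epsilon_{0}^{\,j}$, where $j$ is the least positive integer with $\epsilon_{0}^{\,j}\in\mathbb{Z}[\sqrt{D}]$. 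Writing $\epsilon_{0}=(u+v\sqrt{D})/2$ with $u\equiv v\pmod2$ and $u^{2}-Dv^{2}=\pm4$, membership in $\mathbb{Z}[\sqrt{D}]$ is the condition that both $u$ and $v$ be even.

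The crux is the determination of $j$. Reducing $u^{2}-Dv^{2}=\pm4$ modulo $8$ and using $u^{2}\equiv v^{2}\equiv1\pmod8$ shows that $u,v$ can both be odd only when $D\equiv5\pmod8$. Hence for $D\equiv1\pmod8$ the integers $u,v$ are forced even, so $\epsilon_{0}\in\mathbb{Z}[\sqrt{D}]$ and $j=1$; together with the case $D\not\equiv1\pmod4$ this yields $\epsilon_{0}=P+Q\sqrt{D}$ whenever $D\not\equiv1\pmod4$ or $D\equiv1\pmod8$. For $D\equiv5\pmod8$ either $u,v$ are even (again $j=1$) or both odd, and in the latter subcase I would compute the coordinates of $\epsilon_{0}^{2}$ and $\epsilon_{0}^{3}$ directly: the coefficient of $\sqrt{D}$ in $\epsilon_{0}^{2}$ is $uv/2\notin\mathbb{Z}$, whereas a short mod-$8$ calculation using $u^{2}\equiv1$ and $Dv^{2}\equiv5\pmod8$ shows both coordinates of $\epsilon_{0}^{3}$ are integers. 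Thus $\epsilon_{0},\epsilon_{0}^{2}\notin\mathbb{Z}[\sqrt{D}]$ but $\epsilon_{0}^{3}\in\mathbb{Z}[\sqrt{D}]$, forcing $j=3$ and $\epsilon_{0}^{3}=P+Q\sqrt{D}$.

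To finish, note that in every case $j\in\{1,3\}$ is odd. From $\eta=\epsilon_{0}^{\,j}$ we get $N(\eta)=N(\epsilon_{0})^{j}$, and since $N(\epsilon_{0})\in\{+1,-1\}$ and $j$ is odd we conclude $N(\epsilon_{0})=N(\eta)=(-1)^{s}$, which is positive exactly when $s$ is even and negative otherwise. The main obstacle is precisely the index computation pinning down $j$: one must verify that the cube, and no smaller power, of a half-integer fundamental unit returns to $\mathbb{Z}[\sqrt{D}]$, and it is here that the hypothesis $D\equiv5\pmod8$ and the underlying mod-$8$ arithmetic are essential.
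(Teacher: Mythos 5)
Your argument is correct, but note that the paper does not prove this statement at all: Theorem~\ref{T:6} is quoted verbatim from page 119 of~\cite{N90} and used as a black box, so there is no proof in the paper to compare against. Your proof is the standard one and it holds up: $\eta=P+Q\sqrt{D}$ is the smallest unit $>1$ of the order $\mathbb{Z}[\sqrt{D}]$ with $N(\eta)=(-1)^{s}$ (via Lemma~\ref{L:rsa} together with $s_{s}=1$, and Lemma~\ref{L:smasol} plus the usual ordering argument identifying $A_{ks-1}+B_{ks-1}\sqrt{D}$ with $\eta^{k}$); then everything reduces to computing the index $j$ of $\mathbb{Z}[\sqrt{D}]^{*}\cap\langle\epsilon_{0}\rangle$ in $\langle\epsilon_{0}\rangle$. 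Your mod-$8$ analysis of $u^{2}-Dv^{2}=\pm4$ correctly shows that odd coordinates force $D\equiv5\pmod{8}$, so $j=1$ when $D\not\equiv1\pmod{4}$ or $D\equiv1\pmod{8}$, and the direct computation that $\epsilon_{0}^{3}\in\mathbb{Z}[\sqrt{D}]$ (rational part $u(u^{2}+3Dv^{2})\equiv16u\equiv0$, irrational part $v(3u^{2}+Dv^{2})\equiv8v\equiv0\pmod{8}$) pins down $j\in\{1,3\}$; the norm statement then follows since $j$ is odd. Two minor remarks: the explicit verification that $\epsilon_{0}^{2}\notin\mathbb{Z}[\sqrt{D}]$ is redundant once you know $j\mid3$ and $j\neq1$; and the step ``these are exactly the numbers $\eta^{k}$'' deserves its one-line justification (if some unit of $\mathbb{Z}[\sqrt{D}]$ lay strictly between $\eta^{k}$ and $\eta^{k+1}$, dividing by $\eta^{k}$ would contradict the minimality of $\eta$). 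As written, your proof could replace the citation and make this part of the paper self-contained.
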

We note first of all that the congruence class of $f$,  mod 4,
forces $c$ and $h$ to lie in certain congruence classes. We have the following table:
\begin{table}[ht]
  \begin{center}
    \begin{tabular}{| c | c | c | }
    \hline
    $f$ & $c$ & $h$  \\ \hline
    1 (mod 4) & $\pm$ 1 (mod 8) &  0 (mod 4)\\ \hline
    2 (mod 4) & $\pm$ 1 (mod 16) &  0 (mod 4) \\
     & $\pm$ 3 (mod 8) &  2 (mod 4) \\\hline
       3 (mod 4)    &  $\pm$ 1 (mod 8)     &  0 (mod 4) \\
         & 0 (mod 2)      & 1 (mod 2) \\ \hline
    \end{tabular}
\phantom{asdf}\\
    \caption{}\label{Ta:t1}
    \end{center}
\end{table}

 In
what follows let $f$ be a non-square positive integer and  $(c,h)$ be the
smallest pair of positive integers satisfying $c^{2}-fh^{2}=1$.
We denote the Fermat-Pell polynomials from Theorems \ref{T:hto} - \ref{T:deg4o2d}
as follows:
{\allowdisplaybreaks
\begin{align}\label{cases1}
f_{1}(t)&=h^{2}t^{2} + 2 c t + f, \\
f_{2}(t)&=(c - 1)^{2}h^{2}t^{2} + 2(c - 1)^{2}t + f, \notag \\
f_{3}(t)&=(c + 1)^{2}h^{2}t^{2} + 2(c + 1)^{2}t + f, \notag\\
f_{4}(t)&=(c + 1)^{2}h^{2}t^{2} + 2(c^{2} - 1)t + f, \notag\\
f_{5}(t)&=(c-1)^{2}h^{2}t^{2}(h^{5}t^{2}+4h^{2}t+6) + 2(c-1)(2c-1)t + f. \notag
\end{align}
}
We consider first the case $f\equiv 2 \mod{4}$.  From the table above,
$c$ has to be odd, $h$ has to be even and thus $f_{2}(t)$, $f_{3}(t)$,
$f_{4}(t)$ and $f_{5}(t)$
 are  $\equiv 2
\mod{4}$, for all integral $t \geq 0$ and
$f_{1}(t)$ is $\equiv 2 \mod{4}$ for all \emph{even}  $t \geq 0$.
Thus we can apply Theorem \ref{T:6}.

As an illustration,
by Theorems \ref{T:hto} and \ref{T:6}, if $f(t)=4h^{2}t^{2} + 4 c t + f$ is
squarefree, then the fundamental unit in $\mathbb{Q}( \sqrt{f(t)})$ is
$2h^{2}t+c + h\sqrt{f(t)}$. With $f=22$, $c = 197$ and $h = 42$,
\,$22 + 788\,t + 7056\,t^2$ is squarefree for $150,601$ of the values of $t$
lying between 0 and 200,000.  In particlar, it is squarefree for
$t = 199,998$ so that the fundamental unit in $\mathbb{Q}(\sqrt{282234512826670})$
is immediately known to be $705593141 + 42 \sqrt{282234512826670}$.

Remark: The data in this case suggests the following question:
\[\text{Is }\lim_{N->\infty}
\frac{\#\{t \in \mathbb{N}, t\leq N:  22 + 788\,t + 7056\,t^2
\text{ is squarefree} \}}{N} =\frac{ 3}{4}?
\]
The author is presently unable to provide the answer.

A  similar situation holds for either of the two possibilities listed in
Table \ref{Ta:t1} for the case $f\equiv 3 \mod{4}$. In the first case
($c\equiv \pm 1 \mod{8}$ and $h\equiv 0 \mod{4}$), $f_{2}(t)$, $f_{3}(t)$,
$f_{4}(t)$ and $f_{5}(t)$
take positive integral values which are $\equiv 3 \mod{4}$
for all integral $t \geq 0$, while $f_{1}(t)$ takes on
 positive integral values which are $\equiv 3 \mod{4}$
for all integral \emph{even} $t \geq 0$. In the second case
($c\equiv 0 \mod{2}$ and $h\equiv 1 \mod{2}$), all of the polynomials
take positive integral values which are $\equiv 3 \mod{4}$
for all \emph{even} integral $t \geq 0$.
Theorem \ref{T:6} can again be used to
write down the fundamental unit in infinitely many real quadratic fields.
We again consider $h^{2}t^{2} + 2 c t + f$, with $t$ even, and take
 $f=43$, $c=3482$ and $h=531$. It is found that
$43 + 13928\,t + 1127844\,t^2$
 is squarefree for 147,511 of the  integers $t$ lying between
0 and 200,000. In particlar, it is squarefree for
$t = 199,999$ so that the fundamental unit in
$\mathbb{Q}(\sqrt{45113311649113959})$
is immediately known to be $112783839560 + 531 \sqrt{45113311649113959}$.

Lastly, we consider the case $f\equiv 1 \mod{4}$.  For
$c \equiv 1 \mod{8}$, $f_{2}(t)$, $f_{3}(t)$,
$f_{4}(t)$ and $f_{5}(t)$ $\equiv f \mod{8}$, for all integral $t \geq 0$,
while $f_{1}(t) \equiv f \mod{8}$ for all integral
$t\equiv 0 \mod{4}$.
For  $c \equiv -1 \mod{8}$,
$f_{2}(t)$, $f_{3}(t)$ and $f_{4}(t)$
   $\equiv f \mod{8}$, for all integral $t \geq 0$,
 $ f_{1}(t)\equiv f \mod{8}$ for all positive
$t\equiv 0 \mod{4}$ and $f_{5}(t)$  $\equiv f \mod{8}$ for all
\emph{even} $t \geq 0$.
For the case $c \equiv  0$ (mod 2) and $h \equiv  1$ (mod 2),
$ f_{1}(t)$ and $ f_{5}(t)\equiv f \mod{8}$ for all positive
$t\equiv 0 \mod{4}$, $f_{2}(t)$ $f_{3}(t)$ and
$f_{4}(t)\equiv f \mod{8}$ for all positive
\emph{even} $t$.
 In all of these cases, if $f \equiv 1 \mod{8}$
and the resulting polynomial is squarefree, then the fundamental unit in the
corresponding quadratic field can be written down. This time we consider
the polynomial $f_{2}(t) = (c - 1)^{2}h^{2}t^{2} + 2(c - 1)^{2}t + f$.
By Theorems \ref{T:d2m} and \ref{T:6},  if the continued fraction expansion
of $\sqrt{f}$ has even period and $f_{2}(t)$ is
squarefree, then the fundamental unit in $\mathbb{Q}( \sqrt{f(t)})$ is
$(c - 1)h^{4}t^{2} + 2(c - 1)h^{2}t +c + (h^{3}t + h)\sqrt{f_{2}(t)}$.
As an illustration we take $f=57$ so that $c= 151$ and $h=20$.
$f(t)$ is found to be squarefree for
121,529  of the  integers $t$ lying between
0 and 130,000. In particular, it is squarefree for $t = 130,000$ so
so that the fundamental unit in $\mathbb{Q}(\sqrt{152100005850000057})$
is immediately known to be
$ 405600015600000151 +1040000020 \sqrt{152100005850000057}$.

\section{Concluding Remarks}
In this paper only some limited classes of
Fermat-Pell polynomials were considered
 (for example in all cases all but the end and
possibly the middle terms in the continued fraction expansion were
constant and the highest degree considered was $4$). In a later paper I
will consider multi-variable Fermat-Pell polynomials, polynomials whose
continued fraction expansion has all terms non-constant and
single-variable Fermat-Pell polynomials where the degree can be
arbitrarily large.

Some problems still remain for the classes of
polynomials examined here. Each of the triple of polynomials examined
here constitute a polynomial solution to Pell's by virtue of the fact
that $c^{2} - fh^{2}=1$ and this alone. A natural question is to
determine the continued fraction expansion of $f(t)$ in each of the
cases examined when $(c,h)$, (the smallest pair of positive integers
satisfying the above integral Pell's equation) is replaced by the
$n$th largest pair of such integers.

The continued fraction expansion of some of polynomials examined
were given in some cases only in special circumstances. For example
the continued fraction expansion of
$f(t)=(c + 1)^{2}h^{2}t^{2} + 2(c + 1)^{2}t + f$, examined in
Theorem~\ref{T:deg2P}, was given only in the case where
 \[
\sqrt{f} =
[\,a_{0};\overline{a_{1} ,\cdots ,a_{m-1},a_{m},
a_{m-1},\cdots, a_{1},2a_{0}}]\,\text{ where } a_{m} = a_{0}
\text{ or } a_{0} - 1
\]
and $m$ is even.
There remains the problem of determining the continued fraction
expansion of $f(t)$ when the length of the period of the
continued fraction expansion of  $\sqrt{f}$ is
$ \not \equiv 0 \mod 4$ or $a_{m} \not = a_{0}$
 or  $a_{0} - 1$. Similar problems
remain with the continued fraction expansion of the polynomials
examined in some of the other theorems.

\end{document}